\documentclass[11pt]{amsart}

\usepackage[T1]{fontenc}
\usepackage[utf8]{inputenc}
\usepackage{amsmath,amssymb,amsthm,mathtools}
\usepackage{mathrsfs}
\usepackage{tikz}
\usepackage{enumitem}
\usepackage{needspace}
\usepackage{microtype}
\usepackage{relsize}
\usepackage[hidelinks]{hyperref}
\usepackage[backend=biber,style=alphabetic,maxbibnames=15,maxcitenames=6,dateabbrev=false,autolang=hyphen]{biblatex}

\renewcommand{\UrlFont}{\sffamily\smaller}
\renewbibmacro{in:}{\ifentrytype{article}{}{\printtext{\bibstring{in}\intitlepunct}}}
\DeclareFieldFormat{url}{{\UrlFont\url{#1}}}
\DeclareFieldFormat{urldate}{%
  (version \thefield{urlday}\addspace%
  \mkbibmonth{\thefield{urlmonth}}\addspace%
  \thefield{urlyear}\isdot)}
\DeclareFieldFormat{eprint:arxiv}{%
\ifhyperref
    {\href{http://arxiv.org/abs/#1}{%
    arXiv\addcolon\nolinkurl{#1}}\iffieldundef{eprintclass}{}{\printtext{ [\thefield{eprintclass}]}}}
    {arXiv\addcolon\nolinkurl{#1}\iffieldundef{eprintclass}{}{\printtext{ [\thefield{eprintclass}]}}}}
\DeclareFieldFormat{eprint:arXiv}{%
\ifhyperref
    {\href{http://arxiv.org/abs/#1}{%
    arXiv\addcolon\nolinkurl{#1}}\iffieldundef{eprintclass}{}{\printtext{ [\thefield{eprintclass}]}}}
    {arXiv\addcolon\nolinkurl{#1}\iffieldundef{eprintclass}{}{\printtext{ [\thefield{eprintclass}]}}}}
\defbibheading{mgtbibliography}{\begin{center}\scshape References\end{center}}
\AtBeginBibliography{\small\setlength{\emergencystretch}{3em}}

\newtheorem{theorem}{Theorem}[section]
\newtheorem{lemma}[theorem]{Lemma}
\newtheorem{corollary}[theorem]{Corollary}
\newtheorem{proposition}[theorem]{Proposition}
\theoremstyle{definition}

\newtheorem{question}[theorem]{Question}
\theoremstyle{remark}

\newsavebox{\mgtpossiblebox}
\newsavebox{\mgtnecessarybox}
\sbox{\mgtpossiblebox}{\tikz[scale=.6ex/1cm,baseline=-.6ex,rotate=45,line width=.1ex]{\draw (-1,-1) rectangle (1,1);}}
\sbox{\mgtnecessarybox}{\tikz[scale=.6ex/1cm,baseline=-.6ex,line width=.1ex]{\draw (-1,-1) rectangle (1,1);}}
\DeclareMathOperator{\possible}{\text{\usebox{\mgtpossiblebox}}}
\DeclareMathOperator{\necessary}{\text{\usebox{\mgtnecessarybox}}}
\newcommand{\satisfies}{\models}
\newcommand{\theoryf}[1]{{\rm #1}}

\newcommand{\GrpEmb}{\mathrm{Grp}_{\hookrightarrow}}
\newcommand{\Lgrp}{\mathcal L_{\mathrm{Grp}}}
\newcommand{\LgrpModal}{\Lgrp^{\possible}}
\newcommand{\Val}{\mathrm{Val}}
\newcommand{\satisfiesctbl}{\satisfies_{\mathrm{ctbl}}}
\newcommand{\ord}{\operatorname{ord}}
\newcommand{\gen}[1]{\langle #1\rangle}

\newcommand{\code}{\operatorname{Code}}

\newcommand{\SFourTwo}{\ensuremath{\theoryf{S4.2}}}
\newcommand{\SFive}{\ensuremath{\theoryf{S5}}}
\newcommand{\Z}{\mathbb Z}
\newcommand{\N}{\mathbb N}
\newcommand{\Godel}{G\"odel}

\title{Modal group theory}
\author{Wojciech Aleksander Wo\l oszyn}
\address[Wojciech Aleksander Wo\l oszyn]
{Mathematical Institute, University of Oxford, Andrew Wiles Building, Radcliffe Observatory Quarter, Woodstock Road, Oxford, OX2 6GG, United Kingdom \&\ St Hilda's College, Cowley Place, Oxford, OX4 1DY, United Kingdom}
\email{wojciech@woloszyn.org}
\urladdr{https://woloszyn.org}

\begin{document}

\begin{abstract}
I introduce modal group theory, in which we study the category of all groups, considering embeddability as providing a notion of modal possibility. Using HNN extensions and Britton's lemma, I demonstrate that the modal language of groups is more expressive than the first-order language of groups. I interpret the theory of true arithmetic in modal group theory, and show that, as sets of \Godel\ numbers, it is computably isomorphic to the modal theory of finitely presented groups. I answer an open question of Berger, Block, and L\"owe~\cite{BBL23} by showing that the formulaic propositional modal validities of groups under embeddings are precisely \SFourTwo. I also analyze sentential validities and worlds validating \SFive.
\end{abstract}

\maketitle

\section{Introduction}

Modal group theory is about investigating the category of groups and embeddings from a modal perspective. From the group-theoretic point of view, this means studying what becomes visible when one is allowed to pass from a group to its overgroups. In modal group theory, for example, we want to know the expressive power of the modal language and exactly what the propositional modal validities in the category of groups are.

We say that an assertion $\varphi[\bar g]$ is \emph{possible} at a group $G$, and write $G\satisfies\possible\varphi[\bar g]$, if there is a group embedding $j\colon G\to H$ such that $H\satisfies\varphi[j(\bar g)]$. Similarly, $\varphi[\bar g]$ is \emph{necessary} at $G$ if for every embedding $j\colon G\to H$, we have $H\satisfies\varphi[j(\bar g)]$, in which case we write $G\satisfies\necessary\varphi[\bar g]$. Throughout this paper, possibility means embeddability; homomorphism and epimorphism versions are treated separately in \cite{WoloszynHomomorphisms,WoloszynEpimorphisms}.

To illustrate, every group is possibly necessarily nonabelian. That is, any group satisfies the assertion $\possible\necessary\exists x\exists y\,xy\neq yx$. Indeed, one can embed any group into a nonabelian group and it will remain nonabelian in all further groups. The statement $\exists x\exists y\,xy\neq yx$ is possible and once true, stays true forever. Therefore, the property of being nonabelian is a \emph{button}.

Similarly, it is necessarily possible that the center---the set of elements that commute with all the elements of the group---is trivial, because every group $G$ embeds into $G*F_2$, which has trivial center. But centerlessness, even once true, is not necessary, since from any centerless group $H$ one can pass to $H\times C_2$, whose center is nontrivial. Therefore, being centerless is a \emph{switch}, because we can turn it on and off in all further extensions.

\Needspace{20\baselineskip}
\begin{samepage}
I will show that:
\begin{enumerate}
    \item Many important group-theoretic properties inexpressible in the first-order language of groups become expressible in modal group theory. For example, equality of orders, membership in cyclic subgroups and in subgroups generated by a fixed finite tuple, cyclicity, being generated by at most a fixed number of elements, and torsion are expressible in modal group theory.
    \item As sets of \Godel\ numbers, the theory of true arithmetic is computably isomorphic to the modal theory of finitely presented groups.
    \item The formulaic propositional modal validities of groups under embeddings constitute precisely the modal logic \SFourTwo, answering an open question posed by Berger, Block, and L\"owe~\cite{BBL23}.
\end{enumerate}
\end{samepage}
I also analyze sentential validities and the modal validities holding at individual groups.

\medskip

Meanwhile, modal group theory is part of the larger \emph{modal model theory} program, which investigates modal principles and validities in Kripke categories generally, that is, in concrete categories of structures in a common first-order language. Hamkins and I introduced modal model theory, with modal graph theory as the first case study~\cite{HW24}. In my work on the modal theory of the category of sets, I develop the broader concrete-categorical perspective and extend this semantics beyond the original potentialist systems~\cite{WSet}; I treat the modal theory of linear orders in a separate preprint~\cite{WoloszynLinearOrders}. The terminology of potentialist systems, which can now be seen as a special case of Kripke categories, was introduced earlier in work on arithmetic and set-theoretic potentialism~\cite{HamkinsArithmeticPotentialism,HamkinsLinneboPotentialism}.

Finally, Ziegler's work on finite groups under embeddings contains a precursor to modal group theory; see also Hodges for a translation of part of this work~\cite{Ziegler80,Hodges84}.

\section*{Acknowledgments}
I thank Joel David Hamkins, Ehud Hrushovski, and Goh Jun Le for many helpful discussions related to this article.

\section{Expressive power of modal group theory}

We begin by showing that the modal language makes several natural group-theoretic notions definable. The formal group language is the inverse-free language $\Lgrp=\{\cdot,e\}$, and $\LgrpModal$ denotes its modal expansion by $\possible$. We take $\necessary\varphi$ to abbreviate $\neg\possible\neg\varphi$. Whenever inverse notation appears below, it is only metanotation: thus $u=x^{-1}$ abbreviates $xu=e\wedge ux=e$, and $s^{-1}xs=y$ abbreviates the inverse-free equation $xs=sy$.

\Needspace{20\baselineskip}
\begin{samepage}
\subsection{Expressing equality of order}

\begin{theorem}\label{thm:same-order}
The property that two elements have the same order is expressible in modal group theory. More precisely, there is a formula, denoted by $\ord(x)=\ord(y)$, such that for every group $G$,
\[
G\satisfies \ord(x)=\ord(y)
\quad\text{if and only if}\quad
x\text{ and }y\text{ have the same order in }G.
\]
\end{theorem}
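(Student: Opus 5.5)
The plan is to use HNN extensions. In a group $G$, elements $x$ and $y$ have the same order exactly when the map $x\mapsto y$ extends to an isomorphism $\gen{x}\to\gen{y}$. By HNN theory, this holds exactly when there is an overgroup $H\supseteq G$ and an element $s\in H$ with $s^{-1}xs=y$. So I propose to define
\[
\ord(x)=\ord(y)\quad:\equiv\quad \possible\,\exists s\,(xs=sy),
\]
which uses only the inverse-free abbreviation of conjugation fixed at the start of this section. I need to check two directions, and that the meaning is preserved under embeddings. Since an embedding $j\colon G\to H$ preserves the orders of elements, no further subtlety arises from passing to overgroups.

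For the forward direction, suppose $G\satisfies\possible\exists s\,(xs=sy)$. Then there is an embedding $j\colon G\to H$ and some $s\in H$ with $s^{-1}j(x)s=j(y)$. Conjugation by $s$ is an automorphism of $H$, so $j(x)$ and $j(y)$ have the same order in $H$. Because $j$ is injective, $\ord_G(x)=\ord_H(j(x))$ and $\ord_G(y)=\ord_H(j(y))$, so $x$ and $y$ have the same order in $G$.

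For the converse, suppose $\ord(x)=\ord(y)$ in $G$. Then $\varphi\colon\gen{x}\to\gen{y}$, given by $x^n\mapsto y^n$, is a well-defined isomorphism, whether the common order is finite or infinite. Form the HNN extension
\[
H=\gen{G,s\mid s^{-1}as=\varphi(a)\ (a\in\gen{x})}.
\]
The standard consequence of Britton's lemma (Higman--Neumann--Neumann) is that the natural map $G\to H$ is an embedding. In $H$ we have $xs=sy$, so $G\satisfies\possible\exists s\,(xs=sy)$.

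The main obstacle is justifying that $G$ embeds in the HNN extension. That is exactly the normal-form / Britton's lemma result, which I would cite rather than reprove. The only other care needed is the case of infinite order, and the trivial case $x=y=e$; both are covered by the isomorphism $\varphi$ above. If one wanted to avoid HNN extensions entirely, an alternative is to take $G$ inside $\mathrm{Sym}(G)$ via the regular representation. There $x$ and $y$ have the same cycle type (all cycles of length $\ord(x)$, with the same number of cycles, $|G|/\ord(x)$), so they are conjugate in $\mathrm{Sym}(G)$. This gives a second, elementary proof of the converse.
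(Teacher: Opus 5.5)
Your main argument is correct and is the paper's proof: the same formula $\possible\exists s\,(xs=sy)$, the forward direction because conjugation and embeddings both preserve order, and the converse via the HNN extension along $x^k\mapsto y^k$.

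Your ``second, elementary proof'' via the regular representation is wrong as stated, though. The number of cycles of left multiplication by $x$ on $G$ is the index $[G:\gen{x}]$. For infinite-order elements of a countable group, these indices can differ between $x$ and $y$. For example, in $G=\Z$ with $x=1$ and $y=2$, translation by $1$ is a single infinite cycle, while translation by $2$ has two infinite cycles. So they are not conjugate in $\mathrm{Sym}(\Z)$. When $x$ has finite order, or when $G$ is uncountable, the two indices do agree, so only this case fails.

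The fix is to let $G$ act on $G\times\N$ by $g\cdot(h,i)=(gh,i)$. This action is faithful, and $\gen{x}$ acts freely, so every cycle has length $\ord(x)$. The number of cycles is $[G:\gen{x}]\cdot\aleph_0=\max(|G|,\aleph_0)$, which does not depend on $x$. Hence elements of equal order are conjugate in $\mathrm{Sym}(G\times\N)$.
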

\end{samepage}

Before the proof, recall the relevant HNN construction. Given a group $G$, isomorphic subgroups $A,B\leq G$, and an isomorphism $\phi\colon A\to B$, the HNN extension of $G$ with respect to $\phi$ is
\[
G^*=\langle G,s\mid s^{-1}as=\phi(a)\text{ for all }a\in A\rangle.
\]
The base group $G$ embeds into $G^*$, and $A$ and $B$ become conjugate in $G^*$ \cite{HNN49}.

\begin{proof}
We claim that $x$ and $y$ have the same order if and only if they can become conjugate in an extension. Accordingly, define
\[
\ord(x)=\ord(y)\quad\text{to abbreviate}\quad \possible\exists s\,(xs=sy).
\]
If $G\satisfies\possible\exists s\,(xs=sy)$, then there is an embedding $j\colon G\to H$ and an element $s\in H$ such that $j(x)s=sj(y)$. Equivalently, $s^{-1}j(x)s=j(y)$, so $j(x)$ and $j(y)$ are conjugate in $H$. Conjugate elements have the same order in $H$, and embeddings preserve order, so $x$ and $y$ have the same order in $G$.

Conversely, suppose that $x$ and $y$ have the same order in $G$. Let $A=\gen{x}$ and $B=\gen{y}$, and define an isomorphism $\phi\colon A\to B$ by $\phi(x^k)=y^k$. Form the HNN extension
\[
G^*=\langle G,s\mid s^{-1}as=\phi(a)\text{ for all }a\in A\rangle.
\]
Since $G$ embeds into $G^*$ and $s^{-1}xs=y$ holds in $G^*$ by construction, the displayed modal formula holds in $G$.
\end{proof}

\subsection{Expressing subgroup generation and cyclicity}

\begin{theorem}\label{thm:cyclic-subgroup-membership}
The property that an element belongs to the cyclic subgroup generated by another is expressible in modal group theory. There is a formula, denoted by $y\in\gen{x}$, such that for every group $G$,
\[
G\satisfies y\in\gen{x}
\quad\text{if and only if}\quad
y\in\gen{x}\text{ in }G.
\]
\end{theorem}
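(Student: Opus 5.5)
The plan is to characterize membership in $\gen{x}$ through centralizers, as in Theorem~\ref{thm:same-order}. The guiding claim is that $y\in\gen{x}$ holds exactly when, in every extension, every element commuting with $x$ also commutes with $y$. Accordingly, I would define
\[
y\in\gen{x}\quad\text{to abbreviate}\quad \necessary\forall s\,\bigl(xs=sx\rightarrow ys=sy\bigr).
\]
This is an inverse-free formula of $\LgrpModal$. It then remains to verify the two directions of the equivalence.

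\emph{Forward direction.} Suppose $y=x^k$ in $G$ for some $k\in\Z$, and let $j\colon G\to H$ be any embedding. Then $j(y)=j(x)^k$ in $H$, since homomorphisms preserve powers and inverses. Hence any $s\in H$ commuting with $j(x)$ also commutes with $j(x)^k=j(y)$. So the formula inside $\necessary$ holds in every $H$, and the modal formula holds at $G$.

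\emph{Converse, by contraposition.} Suppose $y\notin\gen{x}$ in $G$. I would build an extension containing an element that commutes with $x$ but not with $y$. Take the HNN extension with $A=B=\gen{x}$ and $\phi$ the identity:
\[
G^*=\langle G,s\mid s^{-1}as=a\text{ for all }a\in\gen{x}\rangle.
\]
Then $G$ embeds in $G^*$ and $s$ commutes with $x$. It remains to show that $sy\neq ys$, that is, that $s^{-1}y^{-1}sy\neq e$ in $G^*$. This is where Britton's lemma enters, and it is the only nontrivial step. The word $s^{-1}y^{-1}s\,y$ contains exactly one $s$-segment, namely $s^{-1}y^{-1}s$. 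This segment is a pinch only if $y^{-1}\in A=\gen{x}$, which fails because $y\notin\gen{x}$. So the word is reduced and contains $s$, and Britton's lemma shows it is nontrivial in $G^*$. Therefore $G\satisfies\possible\exists s\,(xs=sx\wedge ys\neq sy)$, so the defining formula fails at $G$.

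The main obstacle is this application of Britton's lemma. Once it is settled, both directions are routine. If one prefers to avoid Britton's lemma, an alternative is to use the amalgamated product $G*_{\gen{x}}(\gen{x}\times\Z)$ with the normal form theorem for amalgams. I would expect the HNN route to be cleaner and to parallel the proof of Theorem~\ref{thm:same-order}.
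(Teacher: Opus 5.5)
Your proposal is correct and follows the paper's proof: you use the same centralizer formula, the same HNN extension in which $s$ centralizes $\gen{x}$, and Britton's lemma to finish. The only difference is cosmetic. You argue by contraposition and check directly that the word $s^{-1}y^{-1}sy$ is reduced, whereas the paper instantiates the necessary formula at $t=s$ and invokes the general consequence that the elements of $G$ commuting with $s$ are exactly those of $\gen{x}$.
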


For the proof we use Britton's lemma for HNN extensions. If
\[
w=g_0s^{\varepsilon_1}g_1s^{\varepsilon_2}\cdots s^{\varepsilon_n}g_n
\]
with $g_i\in G$ and $\varepsilon_i\in\{\pm 1\}$, then $w$ is \emph{reduced} if it contains no subword of the form $s^{-1}as$ with $a\in A$ and no subword of the form $sbs^{-1}$ with $b\in B$. Britton's lemma says that every reduced word with $n\geq 1$ is nontrivial in the HNN extension \cite[p.~182]{LyndonSchupp}. In particular, in an HNN extension in which the stable letter $s$ centralizes a subgroup $A\leq G$, the elements of $G$ commuting with $s$ are exactly the elements of $A$.

\begin{proof}
We claim that $y\in\gen{x}$ if and only if it is necessary that every element commuting with $x$ also commutes with $y$. Accordingly, define
\[
y\in\gen{x}
\quad\text{to abbreviate}\quad
\necessary\forall t\,(tx=xt\to ty=yt).
\]
If $y=x^k$ for some $k\in\Z$, then every element commuting with $x$ commutes with every power of $x$, hence with $y$. Since this remains true in every extension, the displayed modal formula holds.

Conversely, suppose that
\[
G\satisfies \necessary\forall t\,(tx=xt\to ty=yt).
\]
Form the HNN extension
\[
G^*=\langle G,s\mid s\gamma=\gamma s\text{ for all }\gamma\in\gen{x}\rangle.
\]
By construction, $sx=xs$ holds in $G^*$. Since this necessary first-order assertion must hold in every extension of $G$, it holds in $G^*$, and instantiating $t=s$ yields $sy=ys$ in $G^*$.

By the consequence of Britton's lemma mentioned above, the elements of $G$ commuting with $s$ in $G^*$ are exactly the elements of $\gen{x}$. Hence $y\in\gen{x}$ in $G^*$. Cyclic subgroup membership for elements of the base group is absolute between $G$ and $G^*$, so $y\in\gen{x}$ already in $G$.
\end{proof}

\begin{corollary}\label{cor:cyclic}
The property of being a cyclic group is expressible in modal group theory.
\end{corollary}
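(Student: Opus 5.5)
The plan is to combine the formula $y\in\gen{x}$ from Theorem~\ref{thm:cyclic-subgroup-membership} with a first-order quantifier over the group itself. Concretely, let $\varphi(x,y)$ denote the modal formula abbreviated by $y\in\gen{x}$. I would define cyclicity by the sentence
\[
\exists x\,\forall y\;\varphi(x,y),
\]
that is, $\exists x\,\forall y\,\necessary\forall t\,(tx=xt\to ty=yt)$. Then I would verify that, for every group $G$, $G$ satisfies this sentence if and only if $G$ is cyclic.

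The key point is that Theorem~\ref{thm:cyclic-subgroup-membership} holds for every group $G$ and every choice of parameters $x,y\in G$. The semantics of the modal language evaluates the outer quantifiers $\exists x\,\forall y$ in $G$ itself; only the subformula $\varphi$ looks at extensions. So by that theorem, for each fixed $x,y\in G$ we have $G\satisfies\varphi[x,y]$ exactly when $y\in\gen{x}$ in $G$. Unwinding the quantifiers, $G$ satisfies the sentence exactly when there is some $x\in G$ with $y\in\gen{x}$ for all $y\in G$, which says $G=\gen{x}$, i.e.\ $G$ is cyclic. Both directions follow directly from the biconditional in the theorem, applied pointwise.

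The only step that needs any care is this substitution step. I must make sure that the definability statement in Theorem~\ref{thm:cyclic-subgroup-membership} is a statement about satisfaction with arbitrary parameters from $G$, so that it can be used inside quantifiers. This is how the theorem is phrased and proved, since the proof fixes arbitrary $x,y\in G$. With that, the corollary is immediate and I expect no real obstacle. The same template, $\exists x_1\cdots\exists x_n\,\forall y\,(y\in\gen{x_1,\dots,x_n})$, would handle being generated by at most $n$ elements once the finite-tuple version of subgroup membership is available. Cyclicity needs only the one-generator case.
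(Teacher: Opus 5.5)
Your proposal is correct and is exactly the paper's proof: cyclicity is expressed by the sentence $\exists x\,\forall y\,(y\in\gen{x})$, with Theorem~\ref{thm:cyclic-subgroup-membership} applied pointwise to parameters from $G$. You also correctly identify the only point needing care: the theorem is a statement about satisfaction with arbitrary parameters, which is what lets it be used under the first-order quantifiers.
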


\begin{proof}
By Theorem~\ref{thm:cyclic-subgroup-membership}, a group is cyclic exactly if it satisfies
\[
\exists x\,\forall y\,(y\in\gen{x}).
\]
\end{proof}

Every formula mentions only finitely many variables, so the same method extends from one generator to a finite tuple.

\begin{theorem}\label{thm:finitely-many-generators-membership}
For every $n\geq 1$, the property that an element belongs to the subgroup generated by a tuple of length $n$ is expressible in modal group theory. More precisely, there is a formula, denoted by $y\in\gen{x_1,\dots,x_n}$, such that for every group $G$,
\[
G\satisfies y\in\gen{x_1,\dots,x_n}
\quad\text{if and only if}\quad
y\in\gen{x_1,\dots,x_n}\text{ in }G.
\]
\end{theorem}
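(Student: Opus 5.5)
We generalize the proof of Theorem~\ref{thm:cyclic-subgroup-membership}, replacing $\gen{x}$ by $\gen{x_1,\dots,x_n}$. Define
\[
y\in\gen{x_1,\dots,x_n}
\quad\text{to abbreviate}\quad
\necessary\forall t\,\Bigl(\bigwedge_{i=1}^n tx_i=x_it\to ty=yt\Bigr).
\]
This is a single formula of $\LgrpModal$, because $n$ is fixed and the conjunction is finite. We claim that $y\in\gen{x_1,\dots,x_n}$ holds in $G$ exactly when, in every extension, every element commuting with all the $x_i$ also commutes with $y$.

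For the forward direction, suppose $y\in\gen{x_1,\dots,x_n}$ in $G$. Then $y$ is a finite product of the $x_i$ and their inverses. If an element $t$ commutes with each $x_i$, it also commutes with each $x_i^{-1}$, and hence with any such product. Embeddings preserve the expression of $y$ as a word in the $x_i$, so the same argument works in every extension $H$ of $G$. Therefore the formula holds at $G$.

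For the converse, let $A=\gen{x_1,\dots,x_n}\leq G$ and form the HNN extension
\[
G^*=\langle G,s\mid s a=a s\text{ for all }a\in A\rangle,
\]
taking $\phi=\mathrm{id}_A$. Then $G$ embeds in $G^*$ and $s$ commutes with every $x_i$. By the necessity hypothesis, $sy=ys$ in $G^*$. The consequence of Britton's lemma recorded before Theorem~\ref{thm:cyclic-subgroup-membership} says that the elements of $G$ commuting with $s$ are exactly the elements of $A$. Hence $y\in A$, which is the subgroup generated by the $x_i$ in $G$ itself.

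The one step needing care is that Britton's lemma consequence, now applied to an arbitrary subgroup $A$ rather than a cyclic one. Suppose $g\in G$ commutes with $s$. Then $s^{-1}g^{-1}sg=e$ in $G^*$. The word $s^{-1}g^{-1}sg$ contains the stable letter, so by Britton's lemma it cannot be reduced. Its only candidate pinch is the subword $s^{-1}g^{-1}s$, which forces $g^{-1}\in A$ and hence $g\in A$. This argument works for any subgroup $A$, so no new difficulty arises.
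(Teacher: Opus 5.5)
Your proposal is correct and follows the paper's proof essentially verbatim: the same commutation formula $\necessary\forall t\,\bigl(\bigwedge_{i} tx_i=x_it\to ty=yt\bigr)$, the same HNN extension in which $s$ centralizes $\gen{x_1,\dots,x_n}$, and the same Britton's lemma consequence. The only additions are that you argue directly rather than by contradiction, and that you verify explicitly that $s^{-1}g^{-1}sg$ can only pinch at $s^{-1}g^{-1}s$; the paper merely cites this step, and your check is valid for an arbitrary subgroup.
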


\begin{proof}
We claim that $y\in\gen{x_1,\dots,x_n}$ if and only if
\[
\necessary\forall t\Bigl(\bigwedge_{i=1}^n tx_i=x_it\to ty=yt\Bigr)
\]
holds. If $y$ is a word in the letters $x_i^{\pm 1}$, then any element commuting with each $x_i$ commutes with that word, and hence the displayed formula holds in every extension.

Conversely, assume the displayed formula holds in $G$, but $y\notin\gen{x_1,\dots,x_n}$. Form the HNN extension
\[
G^*=\langle G,s\mid s\gamma=\gamma s\text{ for all }\gamma\in\gen{x_1,\dots,x_n}\rangle.
\]
Then $s$ commutes with each $x_i$ in $G^*$. Since the displayed formula is necessary over $G$, it holds in $G^*$, so substituting $t=s$ yields $sy=ys$. Britton's lemma again implies that the elements of $G$ commuting with $s$ in $G^*$ are exactly the elements of $\gen{x_1,\dots,x_n}$, contradicting the assumption on $y$.
\end{proof}

\begin{corollary}\label{cor:n-generated}
For every integer $n\geq 1$, the property of being generated by at most $n$ elements is expressible in modal group theory.
\end{corollary}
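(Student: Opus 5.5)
The plan is to mirror the proof of Corollary~\ref{cor:cyclic}, replacing Theorem~\ref{thm:cyclic-subgroup-membership} by Theorem~\ref{thm:finitely-many-generators-membership}. For a fixed integer $n\geq 1$, I will show that a group $G$ is generated by at most $n$ elements exactly when
\[
G\satisfies \exists x_1\cdots\exists x_n\,\forall y\,\bigl(y\in\gen{x_1,\dots,x_n}\bigr),
\]
where $y\in\gen{x_1,\dots,x_n}$ is the modal formula supplied by Theorem~\ref{thm:finitely-many-generators-membership}. This is a single sentence of $\LgrpModal$ for each fixed $n$, because the number of variables is fixed in advance.

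For the forward direction, suppose $G$ is generated by $k\leq n$ elements $g_1,\dots,g_k$. I pad the tuple to length $n$ by setting $x_i=g_i$ for $i\leq k$ and $x_i=e$ for $k<i\leq n$. The padded tuple generates the same subgroup, namely all of $G$. Hence every $y\in G$ lies in $\gen{x_1,\dots,x_n}$ in $G$. By Theorem~\ref{thm:finitely-many-generators-membership} this means $G\satisfies y\in\gen{x_1,\dots,x_n}$ for every $y$, so the sentence holds.

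For the converse, suppose witnesses $x_1,\dots,x_n\in G$ satisfy the sentence. Then for each $y\in G$ we have $G\satisfies y\in\gen{x_1,\dots,x_n}$, and Theorem~\ref{thm:finitely-many-generators-membership} gives $y\in\gen{x_1,\dots,x_n}$ in $G$. Therefore $G=\gen{x_1,\dots,x_n}$, and $G$ is generated by at most $n$ elements.

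The only delicate point is that the quantifiers $\exists x_i$ and $\forall y$ are evaluated in $G$ itself, while the modal subformula quantifies over extensions of $G$. Theorem~\ref{thm:finitely-many-generators-membership} is stated for arbitrary parameters in every group $G$, so it applies pointwise to each assignment of $x_1,\dots,x_n,y$ from $G$, and this causes no problem. I do not expect a genuine obstacle. The only other thing to note is that padding with $e$ handles groups needing fewer than $n$ generators, including the trivial group.
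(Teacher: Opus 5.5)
Your proposal is correct and is essentially the paper's proof: the paper also expresses the property by the sentence $\exists x_1\cdots\exists x_n\,\forall y\,(y\in\gen{x_1,\dots,x_n})$ and appeals to Theorem~\ref{thm:finitely-many-generators-membership}. Your remarks on padding the tuple with $e$ and on evaluating the outer quantifiers in $G$ make explicit what the paper's one-line proof leaves implicit.
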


\begin{proof}
By Theorem~\ref{thm:finitely-many-generators-membership}, a group is generated by at most $n$ elements exactly if it satisfies
\[
\exists x_1\cdots\exists x_n\,\forall y\,(y\in\gen{x_1,\dots,x_n}).
\]
\end{proof}

\begin{question}\label{q:finitely-generated}
Is the property of being finitely generated expressible in modal group theory?
\end{question}

\subsection{Expressing torsion}

\begin{theorem}\label{thm:torsion-element}
The property of being a torsion element is expressible in modal group theory. There is a formula, denoted by $\ord(x)<\infty$, such that for every group $G$,
\[
G\satisfies \ord(x)<\infty
\quad\text{if and only if}\quad
x\text{ has finite order in }G.
\]
\end{theorem}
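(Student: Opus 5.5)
The plan is to reduce torsion to the two expressibility results already established: equality of orders (Theorem~\ref{thm:same-order}) and cyclic subgroup membership (Theorem~\ref{thm:cyclic-subgroup-membership}). Both of these give formulas with free variables, so they can be applied to $x$ and to its square $x^2$. The key observation is that $x$ and $x^2$ behave differently according to the order of $x$:
\begin{itemize}
\item if $x$ has infinite order, then $x^2$ also has infinite order, and $\gen{x^2}$ is a proper subgroup of $\gen{x}$;
\item if $x$ has finite odd order $n$, then $x^2$ has order $n$ and $\gen{x^2}=\gen{x}$;
\item if $x$ has finite even order $n$, then $x^2$ has order $n/2\neq n$.
\end{itemize}
Hence $x$ has infinite order if and only if $\ord(x)=\ord(x^2)$ and $x\notin\gen{x^2}$.

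Accordingly, I would define $\ord(x)<\infty$ to abbreviate
\[
\exists z\,\bigl(z=xx\wedge(\neg\,\ord(x)=\ord(z)\ \vee\ x\in\gen{z})\bigr),
\]
where $\ord(x)=\ord(z)$ and $x\in\gen{z}$ are the modal formulas from Theorems~\ref{thm:same-order} and~\ref{thm:cyclic-subgroup-membership} with the variables renamed. The quantified $z$ is a genuine element of $G$, and those theorems hold for all assignments in every group $G$. So $G\satisfies\ord(x)<\infty$ holds exactly when, in $G$, either $\ord(x)\neq\ord(x^2)$ or $x\in\gen{x^2}$.

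The verification then splits into three cases.
\begin{itemize}
\item If $x$ has infinite order, then $\ord(x^2)=\ord(x)$. Also $x=x^{2k}$ would give $x^{2k-1}=e$ with $2k-1\neq 0$, which is impossible. So $x\notin\gen{x^2}$, both disjuncts fail, and the formula fails.
\item If $x$ has finite even order $n$, then $x^2$ has order $n/2\neq n$, so the first disjunct holds.
\item If $x$ has finite odd order $n$, then $2$ is invertible modulo $n$. Choosing $k$ with $2k\equiv1\pmod n$ gives $x=(x^2)^k\in\gen{x^2}$, so the second disjunct holds.
\end{itemize}

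No new HNN or Britton argument is needed, since all of that work is already contained in the cited theorems. The only step requiring care is the syntactic one: the earlier formulas have their free variables instantiated by a variable bound by an outer $\exists z$, possibly outside the scope of modal operators. This is legitimate because those theorems are stated for all assignments of elements in all groups, and the modal semantics evaluates $\possible\varphi[\bar g]$ at the assigned parameters. The main point to double-check is that the case analysis is exhaustive, which it is: every order is infinite, finite even, or finite odd.
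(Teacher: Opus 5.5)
Your proof is correct, but it takes a different route from the paper's.

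The paper uses only Theorem~\ref{thm:cyclic-subgroup-membership}. It says $x$ has finite order exactly when either $\ord(x)\in\{1,2,3,4,6\}$, which is a first-order disjunction, or $\gen{x}$ has a generator $y$ other than $x^{\pm1}$. That argument needs the totient fact $\varphi(n)\geq 3$ for $n\notin\{1,2,3,4,6\}$, together with the observation that $\Z$ has only two generators.

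You instead compare $x$ with the term-defined element $x^2$, combining Theorem~\ref{thm:same-order} with cyclic membership. This replaces the totient classification and the small-order case split with a parity argument: squaring halves an even order, and $2$ is invertible modulo an odd order. Your witness $z$ is also determined as $x^2$ rather than searched for.

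The paper's choice pays off later. In Corollary~\ref{cor:torsion-fp} the torsion formula is transferred to the finitely presented frame by invoking only Lemma~\ref{lem:cyclic-membership-fp}. With your formula you would also have to check that $\possible\exists s\,(xs=sz)$ still defines equality of order in that frame. It does: conjugacy preserves order in any frame. Moreover, the HNN extension $\langle G,s\mid s^{-1}xs=y\rangle$ with cyclic associated subgroups adds just one generator and one relator to a finite presentation. Still, it is an extra lemma to state and prove.
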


\begin{proof}
Let $G$ be any group and let $x\in G$. We claim that $x$ has finite order if and only if either
\begin{enumerate}[label=\textup{(\arabic*)}]
    \item $\ord(x)\in\{1,2,3,4,6\}$, or
    \item there is an element $y\in G$ with $y\neq x$ and $xy\neq e$ such that $\gen{x}=\gen{y}$.
\end{enumerate}
Condition \textup{(1)} is first-order expressible. In a group, the equation $xy=e$ holds exactly when $y=x^{-1}$, so condition \textup{(2)} can be written in the inverse-free language as
\[
\exists y\bigl(y\neq x\wedge xy\neq e\wedge x\in\gen{y}\wedge y\in\gen{x}\bigr),
\]
using Theorem~\ref{thm:cyclic-subgroup-membership}.

Suppose first that $x$ has finite order $n\notin\{1,2,3,4,6\}$. Let $\varphi(n)$ be Euler's totient function. Since the only solutions of $\varphi(n)<3$ are $1,2,3,4,6$, there exists an integer $k$ relatively prime to $n$ such that $k\not\equiv \pm 1\pmod n$. Set $y=x^k$. Then $y\neq x^{\pm 1}$, and because $\gcd(k,n)=1$, B\'ezout's identity gives integers $a,b$ with $ak+bn=1$, so
\[
x=x^{ak+bn}=x^{ak}x^{bn}=x^{ak}=y^a.
\]
Thus $x\in\gen{y}$, while $y\in\gen{x}$ is immediate.

Conversely, suppose there is a $y\neq x^{\pm 1}$ with $\gen{x}=\gen{y}$. If $x$ had infinite order, then $\gen{x}\cong\Z$, whose only generators are $1$ and $-1$. Translating back to $\gen{x}$, the only generators would be $x$ and $x^{-1}$, contrary to the existence of $y$. Therefore $x$ has finite order.
\end{proof}

\begin{samepage}
\begin{corollary}\label{cor:torsion}
\leavevmode\par\nobreak
\begin{enumerate}[label=\textup{(\arabic*)},topsep=0.25em]
    \item The set of torsion elements is definable in modal group theory.
    \item The property of being a torsion group is expressible in modal group theory.
\end{enumerate}
\end{corollary}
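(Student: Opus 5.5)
The corollary follows directly from Theorem~\ref{thm:torsion-element}. That theorem supplies a formula $\ord(x)<\infty$ of $\LgrpModal$, with one free variable $x$, such that for every group $G$ and every $x\in G$, $G\satisfies\ord(x)<\infty$ holds exactly when $x$ has finite order in $G$.

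For part (1), I take this formula itself as the definition: the set of torsion elements of $G$ is $\{x\in G : G\satisfies \ord(x)<\infty\}$. Concretely, the formula is the disjunction of the first-order condition $\ord(x)\in\{1,2,3,4,6\}$, written as $\bigvee_{n\in\{1,2,3,4,6\}} x^n=e$ with $x^n$ spelled out as iterated products, and the existential condition built from the cyclic-membership formula of Theorem~\ref{thm:cyclic-subgroup-membership}. The defined set is the same in every group $G$ and for every $x\in G$, which is what definability in modal group theory asks for.

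For part (2), I quantify universally over this formula and use the sentence
\[
\forall x\,\bigl(\ord(x)<\infty\bigr).
\]
One point needs care: the modal operators inside $\ord(x)<\infty$, coming from the $\necessary$ in the definition of $y\in\gen{x}$, are evaluated at $G$ with parameters from $G$ under the universal quantifier. So I check that the satisfaction relation for $\forall$ followed by a modal formula is the pointwise one. By the semantics of possibility in the introduction, $G\satisfies\forall x\,\psi(x)$ holds iff $G\satisfies\psi[g]$ for every $g\in G$, and modal subformulas with parameters $g$ are interpreted via embeddings $j\colon G\to H$ applied to $j(g)$. Theorem~\ref{thm:torsion-element} is stated for arbitrary $G$ and $x\in G$, so instantiating at each $g$ gives: $G$ satisfies the sentence iff every element of $G$ has finite order, that is, iff $G$ is a torsion group.

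There is no real obstacle. The only thing to confirm is that the equivalence in Theorem~\ref{thm:torsion-element} is uniform, meaning a single formula works for all groups and all parameters. It is, so quantifying over it is legitimate.
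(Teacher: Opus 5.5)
Your proposal is correct and matches the paper's proof: part (1) is read off directly from Theorem~\ref{thm:torsion-element}, and part (2) uses the sentence $\forall x\,(\ord(x)<\infty)$. Your extra check that a universal quantifier over a modal formula is evaluated pointwise, with the modal subformulas taking parameters from $G$, is a routine step that the paper leaves implicit.
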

\end{samepage}

\begin{proof}
Item \textup{(1)} is immediate from Theorem~\ref{thm:torsion-element}. Item \textup{(2)} is expressed by
\[
\forall x\,(\ord(x)<\infty).
\]
\end{proof}

\section{Non-expressivity in first-order group theory}

The modal definitions above genuinely go beyond first-order group theory.

\begin{theorem}\label{thm:not-first-order}
None of the following relations or properties are first-order expressible in the ordinary language of groups:
\begin{enumerate}[label=\textup{(\arabic*)}]
    \item two elements have the same order;
    \item for any fixed $n\geq 1$, an element belongs to the subgroup generated by an $n$-tuple;
    \item for any fixed $n\geq 1$, a group is generated by at most $n$ elements; in particular, a group is cyclic;
    \item an element is torsion;
    \item a group is torsion.
\end{enumerate}
Moreover, by item \textup{(4)}, the set of torsion elements is not first-order definable in general.
\end{theorem}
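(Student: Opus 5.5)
The plan is to use compactness and elementary equivalence: for each item, produce two groups, or two tuples in elementarily equivalent structures, that agree on all first-order formulas but differ on the property in question. The workhorse is compactness, in the form ``a property preserved by elementary equivalence that separates a model from its ultrapower or elementary extension is not first-order.'' Fix the language $\Lgrp$. For items concerning elements we work with formulas $\psi(x,y)$ or $\psi(y,\bar x)$, and suppose toward contradiction that such a $\psi$ defines the relation in every group.

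For \textup{(1)} and \textup{(4)}, take $G=\bigoplus_{n\ge1}\Z/n\Z$ (or $\prod_n \Z/n!\Z$) and a nonprincipal ultrapower $G^{\mathcal U}$. Let $x_n$ be a generator of the $\Z/n!\Z$ factor. The element $[x_n]_{\mathcal U}$ has infinite order in $G^{\mathcal U}$, since for each $k$ almost all coordinates have order exceeding $k$. By Łoś, if ``$x$ is torsion'' were defined by $\tau(x)$, then $\tau(x_n)$ would hold for all $n$, so $\tau([x_n])$ would hold. This contradicts the requirement that $\tau$ define torsion in $G^{\mathcal U}$, which proves \textup{(4)}. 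For \textup{(1)}, compare $[x_n]$ with an element $[z_n]$ where $z_n$ has order $n!$ and $x_n$ has order $n!\cdot 2$; the coordinates disagree everywhere, yet both ultraproduct elements have infinite order. Equivalently, and more simply, use $x=z$ versus $y$ in $\Z$ as follows: if $\ord(x)=\ord(y)$ were first-order, then combined with $\ord(y)=\ord(e)$ the ``finite order'' relation would still not be definable. I will instead reduce \textup{(1)} directly: with $x_n$ of order $n$ and $y_n$ of order $n+1$ in $\bigoplus\Z/k\Z$, every coordinate pair has different orders, but the ultraproduct pair both have infinite order. Łoś then gives the contradiction.

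For \textup{(5)}, the group $\bigoplus_n \Z/n\Z$ is torsion, while its ultrapower is elementarily equivalent to it and contains the infinite-order element above. For \textup{(2)} with $n$ fixed, work in $G=\Z$ or $\Z^{n}$ with $\bar x=e_1,\dots,e_n$: take $y_m=m! e_1$ together with $x_1=e_1$ in a group like $\Z\times\Q$-style setting. The cleanest route is in $\Z$ itself: $y\in\gen{x}$ holds for all $y$ when $x=1$. In $\Z^{\mathcal U}$, however, $[(m)_m]\notin\gen{1}$ because $\gen{1}$ is only the standard integers. Padding with $x_2=\dots=x_n=e$ handles general $n$.

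For \textup{(3)}, $\Z$ is cyclic, but its ultrapower $\Z^{\mathcal U}$ is uncountable and so not $n$-generated for any $n$. Since the two are elementarily equivalent, no sentence expresses ``generated by at most $n$ elements.'' The final remark follows from \textup{(4)} as stated.

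The main obstacle is merely bookkeeping: ensuring in each case that Łoś's theorem applies coordinatewise. A definable relation true in every coordinate group must be true in the ultraproduct, and here it is uniformly defined in all groups by hypothesis. Item \textup{(1)} needs the most care in choosing the pairs.
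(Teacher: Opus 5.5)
Your proof is correct. It rests on the same engine as the paper's, namely compactness in the form of \L{}o\'s's theorem. The difference is that you route every item through an ultrapower $G^{\mathcal U}\succeq G$ of a single group, whereas the paper varies its tools from item to item.

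For (1), (4) and (5) the difference is cosmetic. The paper takes ultraproducts of the groups $C_{2p_n}$ (pairing $x_n$ with $x_n^2$) or of finite cyclic groups of increasing order, and for (4) it uses compactness with a new constant $c$ and the axioms $c^n\neq e$. Your ultrapower of $\bigoplus_k\Z/k\Z$, with $x_n,y_n$ generating the $\Z/n\Z$ and $\Z/(n+1)\Z$ summands, does the same job.

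The real divergences are in (2) and (3).
\begin{itemize}
\item For (2), the paper works in $F_n$ with constants $c,d_1,\dots,d_n$ and the axioms $c\neq w(\bar d)$ for all words $w$, realizing finite fragments by long reduced words. You instead pad with identities to reduce to one generator. You then note that $[(m)_m]\in\Z^{\mathcal U}$ lies outside $\gen{[1]}=\{[k]:k\in\Z\}$, although every coordinate lies in $\gen{1}$. Your version is shorter and uniform with the other items, but it relies on degenerate padded tuples being in the scope of the putative formula. The paper's witnesses keep the $n$ generators independent.
\item For (3), the paper gets an uncountable model from $F_n$ by upward L\"owenheim--Skolem, while you use $\Z\equiv\Z^{\mathcal U}$. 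The uncountability of $\Z^{\mathcal U}$ deserves a line of justification. For instance, $r\mapsto[(\lfloor 2^m r\rfloor)_m]$ is injective on $(0,1)$, because distinct reals give sequences differing at all sufficiently large $m$. Alternatively, simply cite L\"owenheim--Skolem as the paper does.
\end{itemize}

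Finally, tidy up the write-up.
\begin{itemize}
\item The aside in (1) about combining with $\ord(y)=\ord(e)$ is incoherent. Delete it, since the argument you finally settle on there is complete.
\item The opening of (2) about a ``$\Z\times\Q$-style setting'' is garbled. Delete it as well; the padding argument that follows is complete.
\item Your opening slogan is inverted. A property that separates a model from its ultrapower is \emph{not} invariant under elementary equivalence, and that is exactly why it is not first-order.
\end{itemize}
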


\begin{proof}
For item \textup{(1)}, consider the cyclic groups $C_{2p_n}$, where $p_n$ is the $n$th prime. Let $x_n$ be a generator and $y_n=x_n^2$. Then $x_n$ has order $2p_n$ and $y_n$ has order $p_n$, so the two orders are different in every factor. But in any nonprincipal ultraproduct $\prod_n C_{2p_n}/U$, both $[x_n]_U$ and $[y_n]_U$ have infinite order, hence the same order. By Łoś's theorem~\cite{ChangKeisler}, equality of order is not first-order expressible.

For item \textup{(2)}, fix $n\geq 1$ and suppose there were a first-order formula $\phi(y,x_1,\dots,x_n)$ expressing $y\in\gen{x_1,\dots,x_n}$. In the language of groups with constants $c,d_1,\dots,d_n$, consider the theory consisting of group theory, the sentence $\phi(c,d_1,\dots,d_n)$, and the sentences
\[
c\neq w(d_1,\dots,d_n)
\]
for every group word $w$ in $n$ letters. This theory is not satisfiable. But every finite fragment is satisfiable in the free group $F_n=\gen{a_1,\dots,a_n}$: choose a reduced word $w_0(a_1,\dots,a_n)$ representing an element of $F_n$ different from the finitely many elements represented by the forbidden words, interpret $d_i$ by $a_i$ and $c$ by $w_0(a_1,\dots,a_n)$, and note that $c\in\gen{d_1,\dots,d_n}$. By compactness, no such first-order formula exists.

For item \textup{(3)}, fix $n\geq 1$ and suppose being generated by at most $n$ elements were first-order expressible. Then the corresponding first-order theory would have the free group $F_n$ as an infinite model, and hence, by the upward Löwenheim--Skolem theorem, would have uncountable models. But every finitely generated group is countable. So being generated by at most $n$ elements is not first-order expressible. The case $n=1$ gives cyclicity.

For item \textup{(4)}, suppose there were a first-order formula $\tau(x)$ expressing that $x$ is torsion. In the language of groups with a constant $c$, consider the theory consisting of group theory, the sentence $\tau(c)$, and the sentences $c^n\neq e$ for every $n\geq 1$. This theory is inconsistent, but every finite fragment is realized in a sufficiently large finite cyclic group, interpreting $c$ as a generator. This contradicts compactness, so torsion of an element is not first-order expressible.

For item \textup{(5)}, let $G_n$ be finite cyclic groups of strictly increasing orders. Each $G_n$ is a torsion group. But any nonprincipal ultraproduct $\prod_n G_n/U$ contains an element of infinite order, namely the class of generators. Hence the ultraproduct is not torsion. By Łoś's theorem, torsion is not first-order expressible.
\end{proof}

\begin{corollary}\label{cor:compactness-LS-fail}
Modal group theory satisfies neither compactness nor the upward L\"owenheim--Skolem property.
\end{corollary}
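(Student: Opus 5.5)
Both failures will be shown by exhibiting modal sentences that witness them, using the expressibility results of Section 2 together with the standard arguments already used in the proof of Theorem~\ref{thm:not-first-order}. Here compactness means that if every finite subset of a set of $\LgrpModal$-sentences (with constants) is satisfied by some group, then the whole set is satisfied by some group. The upward L\"owenheim--Skolem property means that a modal sentence with an infinite model has models of arbitrarily large (in particular uncountable) cardinality. The satisfaction relation is the modal one, evaluated in the Kripke category of all groups.

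For the failure of compactness, the plan is to mimic item (4) of Theorem~\ref{thm:not-first-order}. Expand by a constant $c$; the modal semantics applies to parameters, since possibility is evaluated along embeddings carrying $c$ to its image. Consider the set consisting of the group axioms, the sentence $\ord(c)<\infty$ from Theorem~\ref{thm:torsion-element}, and the sentences $c^n\neq e$ for all $n\geq1$.
\begin{itemize}
\item Every finite subset is satisfied in a finite cyclic group $C_m$ of large order $m$, with $c$ interpreted as a generator. Theorem~\ref{thm:torsion-element} guarantees that $\ord(c)<\infty$ holds there, since $c$ genuinely has finite order.
\item The whole set has no model, because Theorem~\ref{thm:torsion-element} says $\ord(c)<\infty$ holds exactly when $c$ has finite order.
\end{itemize}
If one prefers a version without constants, I would instead use the sentence ``torsion group'' from Corollary~\ref{cor:torsion}. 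I would add the sentences $\exists x\,(x^n\neq e)$? No: these are satisfied by torsion groups of large exponent, so they give no contradiction. The constant version is the clean route. Alternatively, the failure of compactness follows from the L\"owenheim--Skolem failure via the usual Lindstr\"om-type reasoning, but the direct argument suffices.

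For the failure of upward L\"owenheim--Skolem, I would follow item (3) of Theorem~\ref{thm:not-first-order}. By Corollary~\ref{cor:cyclic} there is an $\LgrpModal$-sentence $\psi$ true in a group exactly when it is cyclic. Then $\Z\satisfies\psi$, so $\psi$ has an infinite model. But every model of $\psi$ is cyclic, hence countable, so $\psi$ has no uncountable model.

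The only step needing care is the semantics of formulas with parameters under embeddings. Theorems~\ref{thm:torsion-element} and~\ref{thm:cyclic-subgroup-membership} are stated as exact characterizations in every group $G$, so this is already handled and no real obstacle remains.
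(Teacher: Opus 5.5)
Your proposal is correct and matches the paper's proof. For compactness the paper uses the same set $\{\ord(c)<\infty\}\cup\{c^n\neq e : n\geq 1\}$, finitely satisfied in large finite cyclic groups with $c$ a generator. For upward L\"owenheim--Skolem it uses the same cyclicity sentence $\exists x\,\forall y\,(y\in\gen{x})$, which has the infinite model $\Z$ but no uncountable model. Your abandoned constant-free detour and the side remark about deriving compactness failure from the L\"owenheim--Skolem failure are unnecessary and can simply be deleted.
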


\begin{proof}
By Theorem~\ref{thm:torsion-element}, $\ord(x)<\infty$ expresses torsion. Hence, in the modal language with one additional constant $c$, the theory
\[
\{\ord(c)<\infty\}\cup\{c^n\neq e:n\geq 1\}
\]
is finitely satisfiable, as every finite fragment is realized in a sufficiently large finite cyclic group with $c$ interpreted as a generator, but it is not satisfiable. Thus compactness fails.

By Corollary~\ref{cor:cyclic}, the modal sentence $\exists x\,\forall y\,(y\in\gen{x})$ expresses cyclicity. It has the infinite model $\Z$, but no uncountable models, since every cyclic group is countable. Thus the upward L\"owenheim--Skolem property fails.
\end{proof}

\section{The complexity of modal group theory}

We now exploit the expressive power established above to interpret arithmetic.

\subsection{Arithmetic with an infinite-order parameter}

\begin{lemma}\label{lem:presburger-with-parameter}
Presburger arithmetic with divisibility over the integers is interpretable in modal group theory with a parameter of infinite order. More precisely, if $G$ is a group containing an element $g$ of infinite order, then there is a computable translation $\varphi\mapsto\varphi^*(x)$ from sentences of the language $\{+,\mid,0,1\}$ to modal group-theoretic formulas such that
\[
\langle\Z,+,\mid,0,1\rangle\satisfies\varphi
\quad\text{if and only if}\quad
G\satisfies\varphi^*[g].
\]
\end{lemma}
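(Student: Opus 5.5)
We interpret $\Z$ as the cyclic subgroup $\gen{g}$, with the integer $k$ represented by the element $g^k$. Since $g$ has infinite order, $k\mapsto g^k$ is a bijection $\Z\to\gen{g}$. By Theorem~\ref{thm:cyclic-subgroup-membership}, the domain $\gen{x}$ is definable by the modal formula $y\in\gen{x}$ with parameter $x$. Under this representation, addition becomes group multiplication: $g^ag^b=g^{a+b}$. The constants become $0\mapsto e$ and $1\mapsto x$. The translation is defined recursively. Each quantifier $\exists v\,\psi$ becomes $\exists v\,(v\in\gen{x}\wedge\psi^*)$. Atomic formulas are first flattened so that they involve only $u+v=w$, $u=0$, $u=1$ and $u\mid v$. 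We then send $u+v=w$ to $uv=w$, $u=0$ to $u=e$, and $u=1$ to $u=x$. This is plainly computable.

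The key step is divisibility. On the group side we use the intrinsic characterization: for $u=g^a$ and $v=g^b$, $a\mid b$ holds if and only if $v\in\gen{u}$. Indeed, if $b=ma$ then $v=u^m$. Conversely, if $v=u^m=g^{am}$, then $b=am$ because $g$ has infinite order. The case $a=0$ needs a check: $0\mid b$ holds exactly when $b=0$, and on the other side $\gen{e}=\{e\}$, so the two sides agree. We therefore translate $u\mid v$ as the modal formula $v\in\gen{u}$ from Theorem~\ref{thm:cyclic-subgroup-membership}. That theorem holds in every group, so it applies in $G$ to the elements $u,v\in\gen{g}$.

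We then prove by induction on formulas that for every formula $\psi(\bar v)$ and all integers $\bar a$,
\[
\Z\satisfies\psi[\bar a]\iff G\satisfies\psi^*[g,g^{\bar a}].
\]
The atomic cases follow from the computations above. The connective cases are trivial. The quantifier case uses the bijection $k\mapsto g^k$ together with the fact that $v\in\gen{x}$ correctly defines $\gen{g}$ in $G$. A point to check is that the modal operators occurring inside $\psi^*$ (via $\necessary$ in the membership formula) are evaluated at $G$ with the actual parameters. This is fine because Theorem~\ref{thm:cyclic-subgroup-membership} is stated as a correctness result at every group for every assignment, so no issue of absoluteness across extensions arises.

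The main obstacle is divisibility. Everything else is the standard relativized interpretation, while divisibility is the only place where the modal expressive power is genuinely needed.
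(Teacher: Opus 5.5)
Your proposal is correct and matches the paper's proof almost exactly: you represent $k$ by $g^k$, send addition to multiplication and $u\mid v$ to the modal formula $v\in\gen{u}$, and relativize quantifiers to $\gen{x}$ after flattening. Your explicit check of the $a=0$ divisibility case and your remark that the modal subformulas are evaluated correctly at $G$ are small clarifications the paper leaves implicit; you should also include the atomic form $u=v$ in your flattened normal form, translated verbatim as the paper does.
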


\begin{proof}
Fix $g\in G$ of infinite order. We represent the integer $i\in\Z$ by the group element $g^i$. Thus $0$ is represented by $e$ and $1$ by $g$. Addition is interpreted by group multiplication:
\[
i+j=k\quad\text{if and only if}\quad g^ig^j=g^k.
\]
Divisibility is interpreted by cyclic subgroup membership:
\[
i\mid j\quad\text{if and only if}\quad g^j\in\gen{g^i}.
\]
By Theorem~\ref{thm:cyclic-subgroup-membership}, the latter relation is expressible in modal group theory.

By a standard effective flattening procedure, every sentence of the language $\{+,\mid,0,1\}$ can be transformed computably into an equivalent sentence without compound terms, in which every atomic formula is one of the forms
\[
u=v,\qquad u=0,\qquad u=1,\qquad u+v=w,\qquad u\mid v,
\]
with $u,v,w$ variables. We therefore define the translation on formulas in this normal form and, for an arbitrary input sentence, first replace it by its flattened equivalent.

Let $\code_x(y)$ abbreviate the modal formula $y\in\gen{x}$. We translate atomic formulas by
\[
\begin{aligned}
(u=v)^*&:=u=v, & (u=0)^*&:=u=e,\\
(u=1)^*&:=u=x, & (u+v=w)^*&:=uv=w,\\
(u\mid v)^*&:=v\in\gen{u}.
\end{aligned}
\]
Boolean connectives are translated recursively in the obvious way, and quantifiers are restricted to the coded copy of $\Z$:
\[
(\exists y\,\psi)^*:=\exists y\,(\code_x(y)\wedge\psi^*),\qquad
(\forall y\,\psi)^*:=\forall y\,(\code_x(y)\to\psi^*).
\]
Since $x=g$ has infinite order, the set of its powers is naturally isomorphic to $\Z$, and under this identification the translated formulas express exactly the intended arithmetic relations. The flattening step and the recursive translation are both computable, so the resulting interpretation is computable.
\end{proof}

The next two lemmas are elementary but useful. To avoid any ambiguity, the formal formulas exhibited below are written entirely in the language $\{+,\mid,0,1\}$; subtraction and multiplication notation are used only as informal metanotation in the surrounding discussion.

\begin{lemma}\label{lem:squaring}
The squaring function is definable in $\langle\Z,+,\mid,0,1\rangle$. More precisely, there is a formula $\phi(x,y)$ such that
\[
\langle\Z,+,\mid,0,1\rangle\satisfies\phi[x,y]
\quad\text{if and only if}\quad
y=x^2.
\]
\end{lemma}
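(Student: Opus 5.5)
The plan is the classical trick for defining squaring from divisibility and addition: use least common multiples of consecutive integers. The key identity is that for $x\neq 0,-1$, $\operatorname{lcm}(x,x+1)=\pm x(x+1)$, since consecutive integers are coprime. Also $x^2=x(x+1)-x$. So I first define, in $\{+,\mid,0,1\}$, the relation ``$z$ is a least common multiple of $u$ and $v$ up to sign'' by
\[
L(u,v,z):\quad u\mid z\wedge v\mid z\wedge\forall w\,\bigl(u\mid w\wedge v\mid w\to z\mid w\bigr).
\]
In $\Z$ this holds exactly when $z=\pm\operatorname{lcm}(u,v)$, and it holds for $z=0$ when $u=0$ or $v=0$.

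Next I pin down the sign. The plan is to use that $x(x+1)\geq 0$ for all integers $x$. Positivity is not directly available, but I will avoid defining an order. Instead I take both candidates $z$ and $-z$ and select the right one with an additive condition: $y=x^2$ should satisfy $y+x=z$ for $z=x(x+1)$. So I set, for $x\notin\{0,-1\}$,
\[
\phi(x,y):\ \exists z\,\exists v\,\bigl(v=x+1\wedge L(x,v,z)\wedge y+x=z\wedge Q(x,y)\bigr).
\]
Here $Q$ has to rule out the wrong candidate $y=-x(x+1)-x=-x^2-2x$.

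To distinguish $x^2$ from $-x^2-2x$, I plan to use $x\mid y$ together with $x+2\mid y'$-type tests. Concretely, the wrong value satisfies $x+2\mid(-x^2-2x)$, whereas $x+2\mid x^2$ only when $x+2\mid 4$, so only for finitely many $x$. A cleaner option is to use the same lcm trick with $x-1$. We have $\operatorname{lcm}(x-1,x)=\pm(x^2-x)$, so $y-x=\pm(x^2-x)$. Requiring both $y+x=\pm(x^2+x)$ and $y-x=\pm(x^2-x)$ leaves the candidates $x^2$, $-x^2-2x$, $-x^2+2x$, and the solution of both wrong signs, $-x^2$ (which needs $-x^2-x=-x^2+x$, i.e.\ $x=0$). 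Intersecting the two solution sets gives $y=x^2$, or $-x^2-2x=-x^2+2x$ (forcing $x=0$), or the mixed cases. The mixed cases are $x^2=-x^2+2x$, forcing $x\in\{0,1\}$, and $-x^2-2x=x^2$, forcing $x\in\{0,-1\}$. So for $|x|\geq 2$ the conjunction defines exactly $y=x^2$.

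Finally I handle the finitely many exceptional $x\in\{-1,0,1\}$ (and any others that appear) by an explicit disjunction. For example, $x=0\to y=0$ and $x=1\to y=1$, with numerals written as $1+1+\cdots$ in flattened form. The step I expect to be the main obstacle is this sign bookkeeping. The verification is elementary, but it must be done carefully so that the final formula stays inside $\{+,\mid,0,1\}$, with $x\pm1$ expressed via auxiliary existential variables $v+1=x$ etc.
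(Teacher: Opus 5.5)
Your final (``cleaner'') plan is essentially the paper's proof. The paper's formula says that $y+x$ has exactly the multiples that are common multiples of $x$ and $x+1$, and that $y-x$ has exactly the multiples that are common multiples of $x$ and $x-1$; it then runs the same sign analysis on $y+x=\varepsilon x(x+1)$, $y-x=\delta x(x-1)$. The mixed-sign cases you flag as exceptional still yield $y=x^2$, and $x=0$ forces $y=0$, so the conjunction alone already defines squaring for every $x$; your extra disjunction for $x\in\{-1,0,1\}$ is harmless but unnecessary.
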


\begin{proof}
Consider the formula
\[
\begin{aligned}
\phi(x,y):=\exists u\exists t\Bigl(&
u+1=x \wedge x+t=y \wedge {}\\
&\forall z\Bigl(
(((x\mid z)\wedge ((x+1)\mid z))\leftrightarrow ((y+x)\mid z))\\
&\qquad\wedge
(((x\mid z)\wedge (u\mid z))\leftrightarrow (t\mid z))
\Bigr)\Bigr).
\end{aligned}
\]
This is a formula of the language $\{+,\mid,0,1\}$. We claim that it defines squaring.

If $y=x^2$, let $u=x-1$ and $t=y-x=x(x-1)$. Since consecutive integers are relatively prime, the common multiples of $x$ and $x+1$ are exactly the multiples of $x(x+1)=y+x$, while the common multiples of $x$ and $u=x-1$ are exactly the multiples of $x(x-1)=t$. Hence $\phi(x,y)$ holds.

Conversely, suppose $\phi(x,y)$ holds, witnessed by $u$ and $t$. Then $u=x-1$ and $t=y-x$. The first biconditional says that $y+x$ has the same set of multiples as $x(x+1)$, and the second says that $y-x$ has the same set of multiples as $x(x-1)$. Therefore there are signs $\varepsilon,\delta\in\{\pm1\}$ such that
\[
y+x=\varepsilon x(x+1),\qquad y-x=\delta x(x-1).
\]
If $x=0$, then the two equations immediately yield $y=0=x^2$. Assume now that $x\neq 0$. Subtracting the displayed equations gives
\[
2x=\varepsilon x(x+1)-\delta x(x-1),
\]
and dividing by $x$ yields
\[
2=\varepsilon(x+1)-\delta(x-1).
\]
If $(\varepsilon,\delta)=(1,1)$, this identity is automatic. If $(\varepsilon,\delta)=(1,-1)$, it forces $x=1$. If $(\varepsilon,\delta)=(-1,1)$, it forces $x=-1$. The choice $(\varepsilon,\delta)=(-1,-1)$ is impossible. In each of the three viable cases, substituting back into the displayed equations yields $y=x^2$. Thus $\phi$ defines the squaring function.
\end{proof}

\begin{lemma}\label{lem:multiplication}
Multiplication is definable in $\langle\Z,+,\mid,0,1\rangle$.
\end{lemma}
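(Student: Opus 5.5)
The plan is to use the polarization identity $(x+y)^2=x^2+2xy+y^2$ together with Lemma~\ref{lem:squaring}. Write $\mathrm{Sq}(x,y)$ for the formula $\phi(x,y)$ given there. I would define $z=xy$ by
\[
\exists a\,\exists b\,\exists c\,\exists w\Bigl(w=x+y\wedge \mathrm{Sq}(x,a)\wedge\mathrm{Sq}(y,b)\wedge\mathrm{Sq}(w,c)\wedge a+b+z+z=c\Bigr).
\]
Strictly, the conjunct $w=x+y$ and the equation $a+b+z+z=c$ are compound terms. They should be unfolded into existentially quantified chains of atomic formulas $u+v=w$, so that the formula stays within the language $\{+,\mid,0,1\}$, as the remark before Lemma~\ref{lem:squaring} requires.

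For correctness, suppose $z=xy$. Take $a=x^2$, $b=y^2$, $c=(x+y)^2$ and $w=x+y$. Then $a+b+2z=c$, so the formula holds. Conversely, suppose the formula holds with these witnesses. Lemma~\ref{lem:squaring} forces $a=x^2$, $b=y^2$ and $c=(x+y)^2$, so $2z=2xy$. Since $\Z$ is torsion-free, cancelling the $2$ gives $z=xy$. Because $z+z$ is written using addition only, no division appears in the formula itself; the cancellation is used only in the metatheory.

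No real obstacle is expected. The only point needing care is that the formula is written purely in $\{+,\mid,0,1\}$, with no subtraction or multiplication symbols, and that the sign issues were already handled inside Lemma~\ref{lem:squaring}. The present argument inherits that lemma's correctness for all integers, negatives included.
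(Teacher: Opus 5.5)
Your proposal is correct and is essentially the paper's proof. The paper's formula $\mu(x,y,z)$ likewise uses Lemma~\ref{lem:squaring} to pin down $x^2$, $y^2$ and $(x+y)^2$, then imposes $a+b=d\wedge d+(z+z)=c$ and cancels the factor $2$ in the metatheory; your extra witness $w=x+y$ is just a more careful handling of the compound term that the paper writes directly as $\phi_{\square}(x+y,c)$.
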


\begin{proof}
Let $\phi_{\square}(u,v)$ be the formula from Lemma~\ref{lem:squaring}, so that $\phi_{\square}(u,v)$ defines $v=u^2$. Consider the formula
\[
\begin{aligned}
\mu(x,y,z):=\exists a\exists b\exists c\exists d\Bigl(&
\phi_{\square}(x+y,c)\wedge \phi_{\square}(x,a)\wedge {}\\
&\phi_{\square}(y,b)\wedge a+b=d \wedge d+(z+z)=c
\Bigr).
\end{aligned}
\]
This is a formula of the language $\{+,\mid,0,1\}$. If $z=xy$, choose $a=x^2$, $b=y^2$, $c=(x+y)^2$, and $d=x^2+y^2$; then the last conjunct is exactly the identity
\[
(x+y)^2=x^2+2xy+y^2.
\]
Conversely, if $\mu(x,y,z)$ holds, then for suitable witnesses $a,b,c,d$ we have $a=x^2$, $b=y^2$, $c=(x+y)^2$, and $d=a+b$, while $d+(z+z)=c$. Hence
\[
z+z=(x+y)^2-x^2-y^2=2xy,
\]
and therefore $z=xy$. Thus multiplication is definable in $\langle\Z,+,\mid,0,1\rangle$.
\end{proof}

\begin{lemma}\label{lem:ring-with-parameter}
The theory of the ring of integers is interpretable in modal group theory with a parameter of infinite order. More precisely, if $G$ contains an element $g$ of infinite order, then there is a computable injective translation $\varphi\mapsto\varphi^*(x)$ from sentences of the ring language $\{+,\cdot,0,1\}$ such that
\[
\langle\Z,+,\cdot,0,1\rangle\satisfies\varphi
\quad\text{if and only if}\quad
G\satisfies\varphi^*[g].
\]
\end{lemma}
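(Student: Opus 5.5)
The plan is to compose the two translations already available. Lemma~\ref{lem:multiplication} gives a formula $\mu(x,y,z)$ of $\{+,\mid,0,1\}$ that defines $z=xy$ in $\langle\Z,+,\mid,0,1\rangle$. The first step is a computable translation $\psi\mapsto\psi^\flat$ from ring sentences to sentences of $\{+,\mid,0,1\}$ such that $\langle\Z,+,\cdot,0,1\rangle\satisfies\psi$ if and only if $\langle\Z,+,\mid,0,1\rangle\satisfies\psi^\flat$. To get it, I will first flatten $\psi$ effectively, exactly as in the proof of Lemma~\ref{lem:presburger-with-parameter}. After flattening, every atomic formula has one of the forms $u=v$, $u=0$, $u=1$, $u+v=w$, $u\cdot v=w$. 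I then replace each atom $u\cdot v=w$ by $\mu(u,v,w)$, renaming bound variables of $\mu$ to avoid capture, and leave the other atoms unchanged. Since $\mu$ defines multiplication, a routine induction on formulas shows that the replacement preserves truth in $\Z$. Each step is a syntactic manipulation, so the map is computable.

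Next, I compose this with the translation of Lemma~\ref{lem:presburger-with-parameter}. Given $g\in G$ of infinite order, set $\varphi^*(x):=(\varphi^\flat)^*(x)$. Then
\[
\langle\Z,+,\cdot,0,1\rangle\satisfies\varphi
\iff\langle\Z,+,\mid,0,1\rangle\satisfies\varphi^\flat
\iff G\satisfies\varphi^*[g].
\]
A composite of computable maps is computable.

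The only point that needs real care is injectivity, which the statement adds beyond Lemma~\ref{lem:presburger-with-parameter}. Flattening need not be injective, since distinct sentences can flatten to the same normal form, so the composite may fail to be injective as it stands. The fix I would try first is to tag the output with a harmless encoding of the input. For example, conjoin $\varphi^*(x)$ with a tautology that codes the \Godel\ number $\ulcorner\varphi\urcorner$, say $n=\ulcorner\varphi\urcorner$ copies of $x=x$, or the formula $\exists y_1\cdots\exists y_n\,(y_1=y_1)$, which is true in every nonempty structure. This leaves the truth value unchanged, and $\varphi$ can be read off computably from the output, so the map is injective. Alternatively, one can arrange that flattening itself records the original term structure, for example by choosing fresh variables indexed by subterm positions. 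Either version gives a computable injective translation with the required property, which completes the plan.
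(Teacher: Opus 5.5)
Your proposal is correct and follows essentially the same route as the paper. You compose the parameter interpretation of $\langle\Z,+,\mid,0,1\rangle$ from Lemma~\ref{lem:presburger-with-parameter} with the definition of multiplication from Lemma~\ref{lem:multiplication}, and you secure injectivity by conjoining a tautological tag that records $\code(\varphi)$; your explicit replacement of atoms $u\cdot v=w$ by $\mu(u,v,w)$ just spells out what the paper calls a fixed recursive syntactic implementation $T(\varphi)(x)$. One small difference: the paper uses $\code(\varphi)+1$ rather than $\code(\varphi)$ copies of $x=x$, which avoids an empty tag when the code is $0$.
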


\begin{proof}
By Lemma~\ref{lem:presburger-with-parameter}, modal group theory with a parameter of infinite order interprets $\langle\Z,+,\mid,0,1\rangle$. By Lemma~\ref{lem:multiplication}, multiplication is definable in that structure. Hence the full ring of integers is interpretable. Fix once and for all a recursive syntactic implementation $T(\varphi)(x)$ of this interpretation.

To make the translation injective on G\"odel codes, add harmless syntactic tags. For each $n\in\N$, let $\theta_n(x)$ be the right-associated conjunction of $n+1$ copies of the tautology $x=x$. The formulas $\theta_n(x)$ are computable, pairwise syntactically distinct, and true under every assignment. Define
\[
\varphi^*(x):=\theta_{\code(\varphi)}(x)\wedge T(\varphi)(x).
\]
The tag does not change truth, while it syntactically records the G\"odel code of $\varphi$. Therefore $\varphi\mapsto\varphi^*(x)$ is computable and injective on G\"odel codes, and the displayed equivalence follows from the correctness of the underlying interpretation $T$.
\end{proof}

\subsection{The ring of integers and true arithmetic}

\begin{theorem}\label{thm:arithmetic-in-modal-group-theory}
The theory of the ring of integers is interpretable in modal group theory. There is a computable injective translation $\varphi\mapsto\varphi^{\dagger}$ from sentences of the ring language $\{+,\cdot,0,1\}$ to modal group-theoretic sentences such that, for every group $G$,
\[
\langle\Z,+,\cdot,0,1\rangle\satisfies\varphi
\quad\text{if and only if}\quad
G\satisfies\varphi^{\dagger}.
\]
Consequently, $\varphi^{\dagger}$ holds at all groups if and only if it holds at the trivial group.
\end{theorem}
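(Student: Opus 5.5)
The plan is to remove the parameter from Lemma~\ref{lem:ring-with-parameter} using a modal step: every group can be extended to contain an element of infinite order, and the arithmetic truth computed from any such element is the same. Concretely, I will set
\[
\varphi^{\dagger}:=\possible\exists x\bigl(\neg\,\ord(x)<\infty\wedge\varphi^*(x)\bigr),
\]
where $\varphi^*(x)$ is the translation from Lemma~\ref{lem:ring-with-parameter} and $\ord(x)<\infty$ is the formula from Theorem~\ref{thm:torsion-element}. Computability and injectivity of $\varphi\mapsto\varphi^{\dagger}$ follow at once from those properties of $\varphi\mapsto\varphi^*$, since the surrounding syntax is a fixed context.

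For correctness, fix a group $G$. Suppose first that $\langle\Z,+,\cdot,0,1\rangle\satisfies\varphi$. Embed $G$ into $G\times\Z$, or into $G*\Z$. The image contains an element $g$ of infinite order, and by Theorem~\ref{thm:torsion-element} it satisfies $\neg\,\ord(g)<\infty$. By Lemma~\ref{lem:ring-with-parameter}, $G\times\Z\satisfies\varphi^*[g]$, and therefore $G\satisfies\varphi^{\dagger}$.

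Conversely, suppose $G\satisfies\varphi^{\dagger}$. Then there is an embedding $G\to H$ and an element $h\in H$ with $H\satisfies\neg\,\ord(h)<\infty$ and $H\satisfies\varphi^*[h]$. By Theorem~\ref{thm:torsion-element}, $h$ has infinite order in $H$. Applying Lemma~\ref{lem:ring-with-parameter} to the pair $(H,h)$ then gives $\langle\Z,+,\cdot,0,1\rangle\satisfies\varphi$.

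The only delicate point is that the earlier lemma and theorems are stated for arbitrary groups and arbitrary parameters. This matters because the modal formulas inside $\varphi^*$ are evaluated at $H$ itself, with $\possible$ and $\necessary$ quantifying over extensions of $H$. That is exactly the setting in which Theorems~\ref{thm:cyclic-subgroup-membership} and \ref{thm:torsion-element} were proved, so no absoluteness issue arises between $G$ and $H$.

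The "consequently" clause is then immediate. The right-hand side of the equivalence does not depend on $G$, so $\varphi^{\dagger}$ holds at the trivial group if and only if $\varphi$ is true in $\Z$, if and only if $\varphi^{\dagger}$ holds at every group.
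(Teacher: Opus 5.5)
Your proposal is correct and follows the paper's proof essentially verbatim: the same wrapper $\possible\exists x\,(\neg(\ord(x)<\infty)\wedge\varphi^*(x))$, the embedding $G\hookrightarrow G\times\Z$, and Lemma~\ref{lem:ring-with-parameter} applied in each direction. One wording slip: the infinite-order witness lies in the target group, for example $(e_G,1)\in G\times\Z$, and not in ``the image'' of $G$, which may be trivial.
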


\begin{proof}
By Theorem~\ref{thm:torsion-element}, the property of having infinite order is expressible as $\neg(\ord(x)<\infty)$. By Theorem~\ref{thm:cyclic-subgroup-membership}, we can express that an element is a power of another. Hence we can eliminate the distinguished infinite-order parameter from Lemma~\ref{lem:ring-with-parameter} by quantifying for it existentially inside the scope of a possibility operator.

Define $\varphi^{\dagger}$ by
\[
\varphi^{\dagger}:=\possible\exists x\,(\neg(\ord(x)<\infty)\wedge \varphi^*(x)),
\]
where $\varphi^*(x)$ is the parameter-dependent translation from Lemma~\ref{lem:ring-with-parameter}. Suppose first that $\langle\Z,+,\cdot,0,1\rangle\satisfies\varphi$, and let $G$ be any group. By Lemma~\ref{lem:ring-with-parameter}, the infinite cyclic group $\Z$ with parameter $1$ satisfies $\varphi^*(1)$. Embed $G$ into $G\times\Z$ by $g\mapsto (g,0)$, and let $x:=(e_G,1)$. Then $x$ has infinite order in $G\times\Z$, and again by Lemma~\ref{lem:ring-with-parameter} we have $(G\times\Z)\satisfies\varphi^*(x)$. Hence $G\satisfies\varphi^{\dagger}$.

Conversely, suppose that $G\satisfies\varphi^{\dagger}$. Then there is a group $H$, an embedding $j\colon G\to H$, and an element $x\in H$ such that $x$ has infinite order and $H\satisfies\varphi^*(x)$. By Lemma~\ref{lem:ring-with-parameter}, it follows that $\langle\Z,+,\cdot,0,1\rangle\satisfies\varphi$.

By Lemma~\ref{lem:ring-with-parameter}, the map $\varphi\mapsto\varphi^*(x)$ is computable and injective on G\"odel codes. Adjoining the fixed outer wrapper $\possible\exists x\,(\neg(\ord(x)<\infty)\wedge \cdot)$ therefore yields a computable injective translation $\varphi\mapsto\varphi^{\dagger}$. The final equivalence follows by taking $G$ to be arbitrary or trivial.
\end{proof}

Recall that a set $A$ is \emph{one--one reducible} to a set $B$ if there is a total injective computable function $f:\N\to\N$ such that $n\in A$ exactly when $f(n)\in B$. A \emph{computable isomorphism} between $A$ and $B$ is a computable bijection $p:\N\to\N$ such that $n\in A$ exactly when $p(n)\in B$.

All theories below are regarded as subsets of $\N$ via a fixed G\"odel coding of sentences; codes that are not sentences of the relevant language are not elements of the theory. We shall use the following totalization convention for the sentence translations constructed below. Suppose that $\tau$ is a computable translation from source-language sentences to target-language sentences, injective on sentence codes, and truth-preserving for the source and target theories under consideration. Before using $\tau$ as a one--one reduction, replace it harmlessly by the truth-equivalent tagged translation
\[
\tau^\sharp(\sigma):=(\exists z\,z=z)\wedge \tau(\sigma),
\]
with the displayed parenthesization fixed as part of the coding. Let $\chi_n$ be the right-associated conjunction of $n+1$ copies of the true sentence $\exists z\,z=z$, and set
\[
\delta_n:=(\forall z\,z\neq z)\wedge \chi_n.
\]
The sentences $\delta_n$ are pairwise syntactically distinct target-language contradictions, and their displayed outer form is disjoint from the image of $\tau^\sharp$. Define a total function on all natural numbers by
\[
F_\tau(n)=
\begin{cases}
\code(\tau^\sharp(\sigma)), & \text{if } n=\code(\sigma) \text{ for a source-language sentence }\sigma,\\
\code(\delta_n), & \text{if } n \text{ is not a source-language sentence code.}
\end{cases}
\]
The predicate ``$n$ codes a source-language sentence'' is decidable, so $F_\tau$ is total computable. It is injective by the injectivity of $\tau$ on sentence codes, the pairwise distinctness of the $\delta_n$, and the disjoint outer tags. Moreover, non-sentence codes are outside the source theory, while each $\delta_n$ is outside the target theory. Thus every such sentence translation induces a genuine one--one reduction of sets of natural numbers. We use Myhill's isomorphism theorem in the form that if $A\leq_1 B$ and $B\leq_1 A$, then there is a computable isomorphism between $A$ and $B$ \cite{Myhill55}.

By \emph{true arithmetic} we mean the complete first-order theory of $\langle\N,+,\cdot,0,1\rangle$.

\begin{lemma}\label{lem:true-arithmetic-to-Z}
There is a computable injective translation $\sigma\mapsto\sigma^{\Z}$ from sentences of first-order arithmetic to sentences of the ring language $\{+,\cdot,0,1\}$ such that
\[
\langle\N,+,\cdot,0,1\rangle\satisfies\sigma
\quad\text{if and only if}\quad
\langle\Z,+,\cdot,0,1\rangle\satisfies\sigma^{\Z}.
\]
\end{lemma}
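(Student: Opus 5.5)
The plan is to relativize quantifiers to the nonnegative integers, which are definable in $\langle\Z,+,\cdot,0,1\rangle$ by Lagrange's four-square theorem. Let
\[
N(u):=\exists a\exists b\exists c\exists d\,\bigl(u=a\cdot a+b\cdot b+c\cdot c+d\cdot d\bigr),
\]
written with the obvious parenthesization. By Lagrange, $N(u)$ holds in $\Z$ exactly when $u\geq 0$. Since $\N$ is closed under $+$ and $\cdot$ and contains $0,1$, the nonnegative integers form a substructure of $\langle\Z,+,\cdot,0,1\rangle$ isomorphic to $\langle\N,+,\cdot,0,1\rangle$.

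Define $\sigma\mapsto\sigma^{\Z}$ by recursion on the syntax of $\sigma$. Atomic formulas are kept unchanged, since terms are interpreted identically and the values of terms at nonnegative arguments stay nonnegative. Boolean connectives commute with the translation. Quantifiers are relativized:
\[
(\exists y\,\psi)^{\Z}:=\exists y\,(N(y)\wedge\psi^{\Z}),\qquad (\forall y\,\psi)^{\Z}:=\forall y\,(N(y)\to\psi^{\Z}).
\]
A routine induction on formulas then shows that for every formula $\psi(\bar y)$ and every tuple $\bar n$ of natural numbers, $\langle\N,+,\cdot,0,1\rangle\satisfies\psi[\bar n]$ if and only if $\langle\Z,+,\cdot,0,1\rangle\satisfies\psi^{\Z}[\bar n]$. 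The atomic case uses the substructure observation, and the quantifier case uses that $N$ defines exactly $\N$. Specializing to sentences gives the displayed equivalence.

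For computability and injectivity, note that the recursion is plainly computable. The translation is also injective, because it is a syntactic homomorphism that can be decoded: one strips the $N$-relativizations and recovers $\sigma$ uniquely. To avoid checking bound-variable clashes in $N$, fix $N$'s bound variables to be fresh relative to $\sigma$ in a canonical way, for instance by choosing variables of index larger than all those occurring in $\sigma$. That choice is determined by $\sigma$, so injectivity still holds via decoding. If one prefers not to verify decodability by hand, one can instead reuse the tagging trick of Lemma~\ref{lem:ring-with-parameter}: conjoin $\theta_{\code(\sigma)}$, built from copies of $0=0$, which records the code of $\sigma$ and makes injectivity immediate.

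The only step that is not pure bookkeeping is the definability of $\N$ inside $\Z$, which rests on Lagrange's four-square theorem. It can be cited as standard, so I do not expect a real obstacle.
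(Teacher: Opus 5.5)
Your proposal is correct and follows the paper's proof: define the nonnegative integers in $\langle\Z,+,\cdot,0,1\rangle$ by Lagrange's four-square theorem, relativize the quantifiers, and induct on formulas. For injectivity the paper uses your fallback option, conjoining the true tag $\overline{n}=\overline{n}$ with $n=\code(\sigma)$. Your decoding argument and your choice of fresh bound variables in $N$ are also fine, and the paper leaves both points implicit.
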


\begin{proof}
Inside $\langle\Z,+,\cdot,0,1\rangle$ define
\[
\mathrm{Nat}(x):=\exists a\exists b\exists c\exists d\,\bigl(x=a^2+b^2+c^2+d^2\bigr).
\]
By Lagrange's four-square theorem, $\mathrm{Nat}(x)$ holds exactly of the nonnegative integers. Let $\sigma\mapsto\sigma^{\circ}$ be the usual relativized translation: atomic formulas of first-order arithmetic are translated verbatim into the ring language, Boolean connectives are translated recursively, and quantifiers are relativized to $\mathrm{Nat}(x)$:
\[
(\exists x\,\psi)^{\circ}:=\exists x\,(\mathrm{Nat}(x)\wedge \psi^{\circ}),\qquad
(\forall x\,\psi)^{\circ}:=\forall x\,(\mathrm{Nat}(x)\to\psi^{\circ}).
\]
Since addition and multiplication on the definable domain $\mathrm{Nat}(\Z)$ agree with addition and multiplication on $\N$, we have
\[
\langle\N,+,\cdot,0,1\rangle\satisfies\sigma
\quad\text{if and only if}\quad
\langle\Z,+,\cdot,0,1\rangle\satisfies\sigma^{\circ}.
\]

To make the sentence map injective, fix a computable family $(\alpha_n)_{n\in\N}$ of pairwise syntactically distinct true ring sentences; for definiteness, let $\alpha_n$ be the sentence $\overline{n}=\overline{n}$. Define
\[
\sigma^{\Z}:=\alpha_{\code(\sigma)}\wedge \sigma^{\circ}.
\]
The added tag is true in $\langle\Z,+,\cdot,0,1\rangle$, so it preserves the displayed equivalence. The map $\sigma\mapsto\sigma^{\Z}$ is computable and injective on G\"odel codes because the left conjunct syntactically records the code of $\sigma$.
\end{proof}

\begin{corollary}\label{cor:true-arithmetic-reduction}
True arithmetic is one--one reducible to modal group theory.
\end{corollary}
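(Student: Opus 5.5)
The plan is to compose the two translations already constructed and then totalize the result using the convention fixed above. Here ``modal group theory'' means the set of modal group-theoretic sentences true at every group. By Theorem~\ref{thm:arithmetic-in-modal-group-theory}, the truth of $\varphi^{\dagger}$ does not depend on the group, so this set is the same as the set of sentences true at the trivial group wherever translated sentences are concerned.

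First, I would take the translation $\sigma\mapsto\sigma^{\Z}$ from Lemma~\ref{lem:true-arithmetic-to-Z}. It is computable, injective on sentence codes, and satisfies $\langle\N,+,\cdot,0,1\rangle\satisfies\sigma$ if and only if $\langle\Z,+,\cdot,0,1\rangle\satisfies\sigma^{\Z}$. I would then apply the translation $\varphi\mapsto\varphi^{\dagger}$ of Theorem~\ref{thm:arithmetic-in-modal-group-theory}, which is also computable and injective. Set $\tau(\sigma):=(\sigma^{\Z})^{\dagger}$. As a composition of computable injective maps, $\tau$ is computable and injective on sentence codes. Chaining the two equivalences gives
\[
\langle\N,+,\cdot,0,1\rangle\satisfies\sigma
\iff \langle\Z,+,\cdot,0,1\rangle\satisfies\sigma^{\Z}
\iff G\satisfies\tau(\sigma)\text{ for every group }G.
\]
Hence $\sigma$ belongs to true arithmetic exactly when $\tau(\sigma)$ holds at all groups. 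For the converse direction, if $\tau(\sigma)$ holds at all groups, it holds at the trivial group in particular, so $\sigma^{\Z}$ is true in $\Z$ by the theorem.

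Finally, I would pass to the total function $F_\tau$ from the totalization convention, with the target language taken to be $\LgrpModal$. Prefixing the conjunct $\exists z\,z=z$ does not change truth at any group, so $\tau^\sharp$ remains truth-preserving. Each $\delta_n$ is false at every group, so it lies outside modal group theory, and non-sentence codes lie outside true arithmetic. By the argument given with the convention, $F_\tau$ is then a total injective computable function witnessing the one--one reduction.

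The only delicate point is bookkeeping: confirming that injectivity survives composition and that the disjoint tags keep $F_\tau$ injective. Both follow directly from the stated properties, so I expect no genuine obstacle.
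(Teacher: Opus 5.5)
Your proposal is correct and is essentially the paper's proof: compose $\sigma\mapsto\sigma^{\Z}$ with $\varphi\mapsto\varphi^{\dagger}$, chain the two equivalences to get truth at all groups, and totalize via $F_\tau$. Your explicit checks that $\tau^\sharp$ stays truth-preserving and that each $\delta_n$ fails at every group are exactly what the totalization convention requires.
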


\begin{proof}
Let $f$ be the sentence map sending a first-order arithmetic sentence $\sigma$ to
\[
f(\sigma):=(\sigma^{\Z})^{\dagger}.
\]
By Lemma~\ref{lem:true-arithmetic-to-Z} and Theorem~\ref{thm:arithmetic-in-modal-group-theory},
\[
\begin{aligned}
\langle\N,+,\cdot,0,1\rangle\satisfies\sigma
&\quad\text{if and only if}\quad
\langle\Z,+,\cdot,0,1\rangle\satisfies\sigma^{\Z}\\
&\quad\text{if and only if}\quad
f(\sigma)\text{ holds at all groups}.
\end{aligned}
\]
The sentence map $f$ is computable and injective, since both $\sigma\mapsto\sigma^{\Z}$ and $\varphi\mapsto\varphi^{\dagger}$ are. By the totalization convention above, it therefore induces a total computable injective function on $\N$ preserving membership and non-membership. Thus true arithmetic is one--one reducible to modal group theory.
\end{proof}

\subsection{Finitely presented groups}

Let $\satisfies_{\mathrm{fp}}$ denote truth in the category of finitely presented groups and embeddings between them. Throughout, a word on $n$ generators is coded by the G\"odel number of its letter-sequence. If $e$ codes a finite presentation, write $\mathrm{Elt}(e,w)$ for the decidable predicate asserting that $w$ is a word on the generators named in the presentation coded by $e$.

\begin{theorem}\label{thm:fp-arithmetization}
For every code $e$ of a finitely presented group $G_e$, every tuple of word-codes $\bar w=\langle w_1,\dots,w_k\rangle$ of the same length as $\bar x$ such that
\[
\bigwedge_{i=1}^k \mathrm{Elt}(e,w_i),
\]
and every formula $\varphi(\bar x)$ in the modal language of groups, there is a computable arithmetical formula $\varphi^{\mathrm{fp}}(e,\bar w)$ such that
\[
(G_e,\bar w)\satisfies_{\mathrm{fp}}\varphi
\quad\text{if and only if}\quad
\N\satisfies\varphi^{\mathrm{fp}}(e,\bar w).
\]
Consequently, the modal theory of finitely presented groups is uniformly interpretable in true arithmetic.
\end{theorem}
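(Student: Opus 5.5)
We build $\varphi^{\mathrm{fp}}$ by recursion on the structure of $\varphi$, coding the whole Kripke category of finitely presented groups inside $\N$. The key observation is that every object and every morphism in this category is a finite object. A finitely presented group is coded by a code $e$ of a finite presentation. An embedding $G_e\to G_{e'}$ is determined by a finite tuple of words $\bar v$, one for each generator of $G_e$, satisfying two conditions:
\begin{itemize}
\item[(a)] each relator of $e$, after substitution, is trivial in $G_{e'}$;
\item[(b)] the induced homomorphism is injective.
\end{itemize}
Triviality of a word in a finitely presented group is $\Sigma_1$: the word is trivial if there exists a finite derivation from the relators. So condition (a) is arithmetical. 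Condition (b) says that for every word $u$ on the generators of $e$, if $u$ is trivial in $G_{e'}$ after substitution then $u$ is trivial in $G_e$. This is a $\Pi_2$ arithmetical formula. Hence the predicate $\mathrm{Emb}(e,e',\bar v)$, asserting that $\bar v$ codes an embedding, is arithmetical.

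The recursion runs as follows.
\begin{itemize}
\item \emph{Atomic formulas.} An equation $x_i x_j=x_k$ or $x_i=e$ with assigned words $\bar w$ translates to ``the word $w_iw_jw_k^{-1}$ (respectively $w_i$) is trivial in $G_e$''. This is the $\Sigma_1$ word-problem predicate $\mathrm{Triv}(e,\cdot)$.
\item \emph{Boolean connectives.} These commute with the translation.
\item \emph{First-order quantifiers.} These become arithmetical quantifiers over word-codes: $(\exists y\,\psi)^{\mathrm{fp}}(e,\bar w):=\exists u\,(\mathrm{Elt}(e,u)\wedge\psi^{\mathrm{fp}}(e,\bar w,u))$, and dually for $\forall$.
\item \emph{Modal clause.} We set
\[
(\possible\psi)^{\mathrm{fp}}(e,\bar w):=\exists e'\,\exists\bar v\,\bigl(\mathrm{Pres}(e')\wedge\mathrm{Emb}(e,e',\bar v)\wedge\psi^{\mathrm{fp}}(e',\bar v[\bar w])\bigr).
\]
Here $\mathrm{Pres}$ is the decidable predicate ``codes a finite presentation'', and $\bar v[\bar w]$ is the computable substitution of the words $\bar v$ for the generators in each $w_i$. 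This substitution gives the images $j(\bar w)$ as words on the generators of $e'$.
\end{itemize}
Every clause is a syntactic operation on formulas, so the map $\varphi\mapsto\varphi^{\mathrm{fp}}$ is computable.

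Correctness is proved by induction on $\varphi$, simultaneously for all $e$ and all $\bar w$.
\begin{itemize}
\item \emph{Atomic case.} This is the definition of equality in $G_e$ together with the soundness and completeness of derivations from relators for the word problem.
\item \emph{Quantifier case.} Every element of $G_e$ is represented by some word. Satisfaction depends only on the element, not on the chosen representative, and the inductive hypothesis covers every representative, so the two sides agree.
\item \emph{Modal case.} Every embedding between finitely presented groups arises from some tuple $\bar v$ satisfying $\mathrm{Emb}$. Conversely, every such tuple induces an embedding. Finally, truth at $H$ is invariant under isomorphism: $H\cong G_{e'}$ for some code $e'$, and choosing a different presentation of the target does not affect truth.
\end{itemize}

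The main obstacle is the modal clause, specifically making sure that $\mathrm{Emb}$ captures exactly the embeddings, so that neither direction fails. Two points need care. First, one must check that the $\Pi_2$ injectivity condition refers to triviality in the correct groups. Second, one must check that the representation of the semantics via presentations is faithful. The category of finitely presented groups has a proper class of objects, but the modal semantics is isomorphism-invariant, so quantifying over codes $e'$ suffices. This is the point to state explicitly, noting that satisfaction in $\satisfies_{\mathrm{fp}}$ is preserved under isomorphism of the world together with its assignment.

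The ``consequently'' clause then follows. A sentence holds in the modal theory of finitely presented groups exactly when $\N\satisfies\forall e\,(\mathrm{Pres}(e)\to\varphi^{\mathrm{fp}}(e))$, and this sentence is computed uniformly from $\varphi$.
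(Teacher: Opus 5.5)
Your proposal is correct and follows essentially the same route as the paper: you code presentations and embeddings by tuples of words, use the $\Sigma^0_1$ word-problem predicate together with a $\Pi^0_2$ injectivity condition, and translate $\possible$ by existential quantification over target codes and embedding codes, with the induction run uniformly in $e$ and $\bar w$. The only minor differences are these. The paper proves a separate syntactic representative-independence claim for $\mathrm{Val}_\psi$; your semantic induction hypothesis, quantified over all representatives, makes that claim unnecessary. You also state explicitly the isomorphism-invariance needed to restrict targets to coded presentations, which the paper leaves implicit.
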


Here each word-code $w_i$ names the element of $G_e$ represented by that word. Thus $(G_e,\bar w)$ denotes the expansion of $G_e$ by constants interpreted as the elements represented by $w_1,\dots,w_k$.

\begin{proof}
A finite presentation
\[
G=\langle a_1,\dots,a_n\mid r_1,\dots,r_m\rangle
\]
is coded by the triple $(n,m,\langle r_1,\dots,r_m\rangle)$. The set of such codes is decidable, so we may define an arithmetical predicate $\mathrm{FPGroup}(e)$ expressing that $e$ is a code of a finite presentation.

An embedding code is a tuple $h=\langle u_1,\dots,u_n\rangle$ of word-codes in a second presentation $e'$, representing the homomorphism that sends the $i$th generator of $e$ to the word $u_i$ in $G_{e'}$. Let $\mathrm{Tuple}_n(h)$ assert that $h$ codes an $n$-tuple, and write $(h)_i$ for its $i$th component. Let $\operatorname{ap}(h,u)$ be the primitive-recursive word-substitution function obtained by replacing the $i$th generator of the source presentation by the word $(h)_i$ and replacing formal inverses accordingly. Let $\mathrm{Apply}(h,u,u')$ abbreviate the equation
\[
u'=\operatorname{ap}(h,u).
\]
Let $\mathrm{Hom}(e,h,e')$ assert that $h$ induces a well-defined homomorphism $G_e\to G_{e'}$: namely, $\mathrm{Tuple}_n(h)$ holds, every $(h)_i$ is a word in the presentation $e'$, and every relator of $e$ maps to the identity in $G_{e'}$. This is a $\Sigma^0_1$ predicate. Whenever $\mathrm{Hom}(e,h,e')$ holds and $\mathrm{Elt}(e,u)$ holds, the word $\operatorname{ap}(h,u)$ is a word in the presentation $e'$.

Write $\mathrm{Triv}(e,w)$ for the $\Sigma^0_1$ predicate asserting that there exists a derivation of $w=1$ in $G_e$. For word-codes $u,v$ in the presentation coded by $e$, write
\[
u\equiv_e v
\]
to mean that $u$ and $v$ represent the same element of $G_e$, that is,
\[
u\equiv_e v
\quad\text{if and only if}\quad
\mathrm{Elt}(e,u)\wedge \mathrm{Elt}(e,v)\wedge \mathrm{Triv}(e,uv^{-1}).
\]
Now define $\mathrm{Emb}(e,h,e')$ to assert that $h$ induces a homomorphism with trivial kernel:
\[
\begin{aligned}
\mathrm{Emb}(e,h,e') := {}&\mathrm{Hom}(e,h,e')\wedge {}\\
&\forall u\forall u'\Bigl(\mathrm{Elt}(e,u)\wedge \mathrm{Apply}(h,u,u')\wedge \mathrm{Elt}(e',u')\\
&\hspace{5.5em}\wedge \mathrm{Triv}(e',u')\to \mathrm{Triv}(e,u)\Bigr).
\end{aligned}
\]

For each formula $\psi$ and assignment $\bar w$, define recursively an arithmetical predicate $\mathrm{Val}_{\psi}(e,\bar w)$. If $\psi$ is atomic of the form $t_1=t_2$, compute the word-code $v=v(t_1,t_2,\bar w)$ for the word obtained by substituting the assignment into $t_1t_2^{-1}$, and set
\[
\mathrm{Val}_{t_1=t_2}(e,\bar w):=\mathrm{Triv}(e,v).
\]
Boolean connectives are handled in the obvious way. First-order quantifiers range over word-codes satisfying $\mathrm{Elt}(e,w)$; explicitly,
\[
\begin{aligned}
\mathrm{Val}_{\exists y\,\theta}(e,\bar w)&:=
\exists z\,\bigl(\mathrm{Elt}(e,z)\wedge \mathrm{Val}_{\theta}(e,\bar w,z)\bigr),\\
\mathrm{Val}_{\forall y\,\theta}(e,\bar w)&:=
\forall z\,\bigl(\mathrm{Elt}(e,z)\to \mathrm{Val}_{\theta}(e,\bar w,z)\bigr).
\end{aligned}
\]
The representative-independence claim below justifies quantifying over word-codes rather than over elements themselves. Since $\necessary\psi$ is, by definition, the abbreviation $\neg\possible\neg\psi$, it suffices to give the modal clause for $\possible$.

For the modal case, if $\bar w=\langle w_1,\dots,w_k\rangle$, define
\[
\begin{aligned}
\mathrm{Val}_{\possible\psi}(e,\bar w):={}&\exists e'\exists h\exists w_1'\cdots\exists w_k'\Bigl(\mathrm{FPGroup}(e')\wedge \mathrm{Emb}(e,h,e')\wedge {}\\
&\bigwedge_{i=1}^k\mathrm{Apply}(h,w_i,w_i')\wedge \bigwedge_{i=1}^k\mathrm{Elt}(e',w_i')\\
&\wedge \mathrm{Val}_{\psi}(e',\langle w_1',\dots,w_k'\rangle)\Bigr).
\end{aligned}
\]

We shall use the following elementary observation. If $\mathrm{Hom}(e,h,e')$ holds, $u\equiv_e v$, $\mathrm{Apply}(h,u,u')$, and $\mathrm{Apply}(h,v,v')$, then
\[
u'\equiv_{e'} v'.
\]
Indeed, a derivation of $uv^{-1}=1$ from the relators of $e$ may be substituted by the word-substitution coded by $h$. The images of the relators of $e$ are trivial in $G_{e'}$ by $\mathrm{Hom}(e,h,e')$, so splicing in derivations of those image relators gives a derivation of $u'(v')^{-1}=1$ in $G_{e'}$.

\smallskip
\noindent\textbf{Claim.}
For every finite-presentation code $e$ and every modal formula $\psi(\bar x)$, if $\bar u=\langle u_1,\dots,u_k\rangle$ and $\bar v=\langle v_1,\dots,v_k\rangle$ are tuples of word-codes satisfying
\[
\bigwedge_{i=1}^k u_i\equiv_e v_i,
\]
then
\[
\mathrm{Val}_{\psi}(e,\bar u)
\quad\text{if and only if}\quad
\mathrm{Val}_{\psi}(e,\bar v).
\]

\smallskip
\noindent\emph{Proof of claim.}
The proof is by induction on the complexity of $\psi$, uniformly in the finite-presentation code $e$.

For atomic formulas $t_1=t_2$, group terms respect $\equiv_e$. Hence, if $u_i\equiv_e v_i$ for every $i$, then the word obtained by substituting $\bar u$ into $t_1t_2^{-1}$ represents the identity in $G_e$ if and only if the word obtained by substituting $\bar v$ into $t_1t_2^{-1}$ represents the identity in $G_e$.

Boolean connectives are immediate.

For first-order quantifiers, use the same quantified word-code on both sides. For instance, if
\[
\mathrm{Val}_{\exists y\,\theta}(e,\bar u)
\]
is witnessed by a word-code $z$ with $\mathrm{Elt}(e,z)$, then $z\equiv_e z$, and the induction hypothesis applied to the tuples $(\bar u,z)$ and $(\bar v,z)$ yields
\[
\mathrm{Val}_{\theta}(e,\bar v,z).
\]
Thus $\mathrm{Val}_{\exists y\,\theta}(e,\bar v)$. The converse is symmetric. The universal quantifier is handled similarly, or by the definition of $\forall$ from $\exists$ and negation.

It remains to consider the modal case. Suppose
\[
\mathrm{Val}_{\possible\theta}(e,\bar u)
\]
holds. Then there are $e'$, $h$, and word-codes $\bar u'=\langle u_1',\dots,u_k'\rangle$ such that
\[
\mathrm{FPGroup}(e')\wedge \mathrm{Emb}(e,h,e')
\]
holds, each $u_i'$ is the image of $u_i$ under $h$, and
\[
\mathrm{Val}_{\theta}(e',\bar u')
\]
holds. For each $i$, let $v_i'=\operatorname{ap}(h,v_i)$. Since $\mathrm{Emb}(e,h,e')$ implies $\mathrm{Hom}(e,h,e')$, and since $u_i\equiv_e v_i$, the observation above gives
\[
u_i'\equiv_{e'} v_i'
\]
for every $i$. Also $\mathrm{Elt}(e',v_i')$ holds, since $\mathrm{Hom}(e,h,e')$ and $\mathrm{Elt}(e,v_i)$ hold. Since $\mathrm{FPGroup}(e')$ holds, the induction hypothesis may be applied in the target presentation $e'$, giving
\[
\mathrm{Val}_{\theta}(e',\bar u')
\quad\text{if and only if}\quad
\mathrm{Val}_{\theta}(e',\bar v').
\]
Therefore $\mathrm{Val}_{\theta}(e',\bar v')$ holds. The same $e'$ and $h$, together with the image tuple $\bar v'$, witness
\[
\mathrm{Val}_{\possible\theta}(e,\bar v).
\]
The reverse implication is symmetric. This proves the claim.

\smallskip
Set $\varphi^{\mathrm{fp}}(e,\bar w):=\mathrm{Val}_{\varphi}(e,\bar w)$. The map $\varphi\mapsto\varphi^{\mathrm{fp}}$ is primitive-recursive.

We prove by induction on $\varphi$ that
\[
(G_e,\bar w)\satisfies_{\mathrm{fp}}\varphi
\quad\text{if and only if}\quad
\N\satisfies\varphi^{\mathrm{fp}}(e,\bar w).
\]
The atomic and Boolean steps are straightforward. For the quantifier step, one uses the representative-independence claim: every element of $G_e$ has a word representative, and the truth of $\mathrm{Val}_{\psi}(e,\bar w)$ is unchanged when $\bar w$ is replaced by any tuple of word-codes representing the same elements.

For the modal step, it suffices to consider $\possible\psi$, since $\necessary\psi$ is by definition the abbreviation $\neg\possible\neg\psi$. Suppose first that $(G_e,\bar w)\satisfies_{\mathrm{fp}}\possible\psi$. Then there is an embedding $j:G_e\hookrightarrow G_{e'}$ into a finitely presented group such that $(G_{e'},j(\bar w))\satisfies_{\mathrm{fp}}\psi$. Choose word representatives in $G_{e'}$ for the images under $j$ of the generators of $G_e$; these give a code $h$ with $\mathrm{Emb}(e,h,e')$. If $w_i'=\operatorname{ap}(h,w_i)$, then $w_i'$ represents $j(w_i)$. By the induction hypothesis, $\mathrm{Val}_{\psi}(e',\bar w')$ holds, and so $\mathrm{Val}_{\possible\psi}(e,\bar w)$ holds.

Conversely, suppose $\mathrm{Val}_{\possible\psi}(e,\bar w)$ holds. Then there are $e'$, $h$, and $\bar w'$ witnessing the displayed arithmetical modal clause. Since $\mathrm{Emb}(e,h,e')$ holds, $h$ induces an injective homomorphism $j_h:G_e\hookrightarrow G_{e'}$. The equations $\mathrm{Apply}(h,w_i,w_i')$ say that $w_i'$ represents $j_h(w_i)$. By the induction hypothesis, $(G_{e'},\bar w')\satisfies_{\mathrm{fp}}\psi$. Hence $(G_e,\bar w)\satisfies_{\mathrm{fp}}\possible\psi$.

Now suppose that $\varphi$ is a sentence. Define
\[
\widehat{\varphi}_{\mathrm{fp}}:=\forall e\,(\mathrm{FPGroup}(e)\to \varphi^{\mathrm{fp}}(e)).
\]
Then
\[
\varphi\text{ is valid in all finitely presented groups}
\quad\text{if and only if}\quad
\N\satisfies\widehat{\varphi}_{\mathrm{fp}}.
\]
To obtain an injective sentence map, fix a computable family $(\tau_n)_{n\in\N}$ of pairwise distinct true arithmetic sentences; for definiteness, let $\tau_n$ be the sentence $\overline{n}=\overline{n}$, where $\overline{n}$ is the usual numeral for $n$. Now define
\[
\widetilde{\varphi}_{\mathrm{fp}}:=\tau_{\code(\varphi)}\wedge \widehat{\varphi}_{\mathrm{fp}}.
\]
Since each $\tau_n$ is true in $\N$, we still have
\[
\varphi\text{ is valid in all finitely presented groups}
\quad\text{if and only if}\quad
\N\satisfies\widetilde{\varphi}_{\mathrm{fp}}.
\]
The map $\varphi\mapsto\widetilde{\varphi}_{\mathrm{fp}}$ is computable. It is also injective on G\"odel codes of sentences, because $\varphi\mapsto \code(\varphi)$ is computable, the family $(\tau_n)_{n\in\N}$ is pairwise syntactically distinct, and adjoining the fixed right conjunct $\widehat{\varphi}_{\mathrm{fp}}$ preserves that distinction. By the totalization convention above, this sentence map induces a genuine total one--one reduction on subsets of $\N$. This yields both the desired uniform interpretation of the modal theory of finitely presented groups in true arithmetic and a one--one reduction of that theory to true arithmetic.
\end{proof}

\begin{lemma}\label{lem:cyclic-membership-fp}
Let $G$ be a finitely presented group and let $x,y\in G$. Then the modal formula from Theorem~\ref{thm:cyclic-subgroup-membership}, when interpreted in the finitely presented frame, still defines cyclic subgroup membership:
\[
G\satisfies_{\mathrm{fp}} y\in\gen{x}
\quad\text{if and only if}\quad
y\in\gen{x}\text{ in }G.
\]
\end{lemma}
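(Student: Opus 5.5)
The plan is to rerun the proof of Theorem~\ref{thm:cyclic-subgroup-membership}, now with $\necessary$ ranging only over finitely presented overgroups of $G$, where embeddings go into groups of the form $G_{e'}$. The formula $y\in\gen{x}$ abbreviates $\necessary\forall t\,(tx=xt\to ty=yt)$. The forward direction carries over unchanged. If $y=x^k$ in $G$, then in every group $H$ into which $G$ embeds, finitely presented or not, every element commuting with $j(x)$ commutes with $j(x)^k=j(y)$. Hence the necessity statement holds in the finitely presented frame, and indeed in any subframe of the category of groups.

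For the converse, I will check that the witnessing overgroup from the original proof is itself finitely presented. Suppose $G=\langle a_1,\dots,a_n\mid r_1,\dots,r_m\rangle$ and $x\in G$ is represented by a word $w_x$. The HNN extension
\[
G^*=\langle G,s\mid s\gamma=\gamma s\text{ for all }\gamma\in\gen{x}\rangle
\]
has the presentation
\[
\langle a_1,\dots,a_n,s\mid r_1,\dots,r_m,\ s\,w_x\,s^{-1}w_x^{-1}\rangle.
\]
The point is that the associated subgroup $\gen{x}$ is cyclic, so the infinitely many relations $s\gamma=\gamma s$ for $\gamma\in\gen{x}$ all follow from the single relation $sx=xs$. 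Thus $G^*$ is finitely presented. Equivalently, $G^*$ is the amalgamated free product $G*_{\gen{x}}(\gen{x}\times\gen{s})$.

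By the HNN embedding theorem, $G$ embeds into $G^*$, so $G^*$ is an accessible world in the finitely presented frame. Assume $G\satisfies_{\mathrm{fp}}\necessary\forall t\,(tx=xt\to ty=yt)$. Instantiating at $G^*$ with $t=s$ gives $sy=ys$. By the consequence of Britton's lemma recorded before Theorem~\ref{thm:cyclic-subgroup-membership}, the elements of $G$ that commute with $s$ are exactly those of $\gen{x}$. Hence $y\in\gen{x}$.

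The one step that needs care is the bookkeeping of the modal semantics. In $\satisfies_{\mathrm{fp}}$, the statement $\necessary\psi$ at $G$ means that $\psi$ holds at every finitely presented $H$ reached by an embedding. The inner formula $\forall t(\cdots)$ is first-order with no nested modality, so its truth at $G^*$ is ordinary first-order truth. The proof therefore only needs $G^*$ to be a world of the frame, which the presentation above shows. I expect no real obstacle beyond making explicit that one relator suffices for the cyclic case.
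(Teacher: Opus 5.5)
Your proposal is correct and follows essentially the same route as the paper. The forward direction is inherited from the full embedding frame. For the converse, you observe that $\langle G,s\mid sx=xs\rangle$ is finitely presented with a single extra relator, so it is an accessible world of the finitely presented frame, and then the Britton's lemma centralizer argument gives $y\in\gen{x}$.
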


\begin{proof}
If $y\in\gen{x}$ in $G$, then every element commuting with $x$ in any extension of $G$ commutes with $y$, and hence in particular in every finitely presented overgroup of $G$. So the formula from Theorem~\ref{thm:cyclic-subgroup-membership} holds in the finitely presented frame.

Conversely, assume that $G\satisfies_{\mathrm{fp}} y\in\gen{x}$, where the displayed formula is interpreted in the finitely presented frame. Form the HNN extension
\[
G^*=\langle G,s\mid sx=xs\rangle.
\]
Because $G$ is finitely presented and the associated subgroup $\gen{x}$ is cyclic, this HNN extension is finitely presented: if
\[
G=\langle a_1,\dots,a_n\mid r_1,\dots,r_m\rangle
\]
and $x$ is represented by a word $w(a_1,\dots,a_n)$, then
\[
G^*=\langle a_1,\dots,a_n,s\mid r_1,\dots,r_m,\, sw=ws\rangle.
\]
Thus $G^*$ is a finitely presented overgroup of $G$. Since $sx=xs$ holds in $G^*$ and the necessary assertion from Theorem~\ref{thm:cyclic-subgroup-membership} holds over all finitely presented overgroups, substituting $t=s$ yields $sy=ys$ in $G^*$. By the same Britton-lemma argument as in Theorem~\ref{thm:cyclic-subgroup-membership}, the elements of $G$ commuting with $s$ in $G^*$ are exactly the elements of $\gen{x}$. Hence $y\in\gen{x}$ in $G$.
\end{proof}

\begin{corollary}\label{cor:torsion-fp}
Let $G$ be a finitely presented group and let $x\in G$. Then the formula from Theorem~\ref{thm:torsion-element}, interpreted in the finitely presented frame, still defines finite order:
\[
G\satisfies_{\mathrm{fp}} \ord(x)<\infty
\quad\text{if and only if}\quad
x\text{ has finite order in }G.
\]
\end{corollary}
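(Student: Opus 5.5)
The formula $\ord(x)<\infty$ from Theorem~\ref{thm:torsion-element} is a disjunction of two parts. The first is the first-order assertion $\ord(x)\in\{1,2,3,4,6\}$. The second is the assertion
\[
\exists y\bigl(y\neq x\wedge xy\neq e\wedge x\in\gen{y}\wedge y\in\gen{x}\bigr).
\]
The only modal ingredients are the two occurrences of the cyclic-membership formula. So I will evaluate the formula in the finitely presented frame at $G$, a finitely presented group. The first-order parts have their usual meaning in $G$. Each instance $x\in\gen{y}$ or $y\in\gen{x}$ is evaluated at $G$ with parameters from $G$, and Lemma~\ref{lem:cyclic-membership-fp} says exactly that it holds in the finitely presented frame if and only if genuine cyclic subgroup membership holds in $G$. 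No modal operator is nested inside another, so this substitution is legitimate. Formally, I would argue by a one-step induction on the construction of the formula: the subformula $\necessary\forall t\,(\dots)$ is replaced by the equivalent first-order-over-$G$ relation, and quantifiers and Booleans are evaluated in $G$ itself.

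With that done, $G\satisfies_{\mathrm{fp}}\ord(x)<\infty$ holds if and only if $G$ satisfies the non-modal condition: either $\ord(x)\in\{1,2,3,4,6\}$, or there is $y\in G$ with $y\neq x^{\pm1}$ and $\gen{x}=\gen{y}$. The purely group-theoretic argument in the proof of Theorem~\ref{thm:torsion-element} shows that this condition is equivalent to $x$ having finite order. That argument uses the totient bound and B\'ezout in one direction, and the fact that $\Z$ has only the generators $\pm1$ in the other. It concerns only the group $G$, not the ambient frame, so it applies verbatim.

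The only point needing care is the first step: checking that the whole formula's truth at $G$ depends on the frame only through the evaluation of the membership subformulas at $G$ itself, with $y$ ranging over $G$. This holds because every outer quantifier ranges over elements of $G$, and the modal operators occur only inside the membership subformulas. I expect no real obstacle there.
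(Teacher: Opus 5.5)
Your proposal is correct and follows essentially the same route as the paper. Both arguments note that the only modal ingredients of $\ord(x)<\infty$ are the cyclic-membership subformulas, each evaluated at $G$ itself with parameters from $G$. Both then invoke Lemma~\ref{lem:cyclic-membership-fp} to give these subformulas their intended meaning in the finitely presented frame, and rerun the purely group-theoretic argument of Theorem~\ref{thm:torsion-element}. Your explicit check that no modal operator is nested and that all outer quantifiers range over $G$ spells out what the paper leaves implicit.
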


\begin{proof}
The formula from Theorem~\ref{thm:torsion-element} is a first-order combination of equalities together with instances of cyclic subgroup membership. By Lemma~\ref{lem:cyclic-membership-fp}, those cyclic-subgroup clauses have the intended meaning in the finitely presented frame. Therefore the same proof as in Theorem~\ref{thm:torsion-element} gives the displayed equivalence.
\end{proof}

\begin{lemma}\label{lem:ring-with-parameter-fp}
Let $G$ be a finitely presented group and let $g\in G$ have infinite order. Then the translation $\varphi\mapsto\varphi^*(x)$ from Lemma~\ref{lem:ring-with-parameter} satisfies
\[
\langle\Z,+,\cdot,0,1\rangle\satisfies\varphi
\quad\text{if and only if}\quad
G\satisfies_{\mathrm{fp}}\varphi^*[g]
\]
for every sentence $\varphi$ of the ring language $\{+,\cdot,0,1\}$.
\end{lemma}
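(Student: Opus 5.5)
The plan is to rerun the proof of Lemma~\ref{lem:ring-with-parameter}, which is built from Lemma~\ref{lem:presburger-with-parameter}, Lemma~\ref{lem:multiplication} and the tagging step, and to check that each modal ingredient still has its intended meaning when $\satisfies$ is replaced by $\satisfies_{\mathrm{fp}}$. The translation $\varphi^*(x)$ is fixed syntax: the tag $\theta_{\code(\varphi)}(x)$ conjoined with $T(\varphi)(x)$. Its only modal subformulas are instances of $\code_x(y)$, which is $y\in\gen{x}$, and of $v\in\gen{u}$, where the latter interprets divisibility. Everything else in $\varphi^*$ is first-order: equalities, products, Boolean connectives and quantifiers. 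The truth of these first-order parts at a finitely presented group $G$ does not depend on the frame. So all of the frame-dependence is confined to the cyclic-membership clauses.

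I will prove, by induction on subformulas $\psi$ of the flattened ring sentence, that for every assignment of elements of $\gen{g}$ to the free variables,
\[
G\satisfies_{\mathrm{fp}}\psi^*[g,\bar a]\quad\text{if and only if}\quad G\satisfies\psi^*[g,\bar a].
\]
\begin{enumerate}[label=\textup{(\roman*)}]
\item For atomic clauses of the forms $u=v$, $u=e$, $u=x$ and $uv=w$, the two sides agree trivially, since these are first-order statements about $G$.
\item For the divisibility clause $v\in\gen{u}$, Lemma~\ref{lem:cyclic-membership-fp} gives that $G\satisfies_{\mathrm{fp}}v\in\gen{u}$ exactly when $v\in\gen{u}$ holds in $G$. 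By Theorem~\ref{thm:cyclic-subgroup-membership}, this is also exactly when $G\satisfies v\in\gen{u}$.
\item For quantifiers, the relativizing formula $\code_x(y)$ is handled by the same lemma. It therefore carves out exactly $\gen{g}$ in both semantics, so the quantifier step goes through.
\item Boolean steps and the tautological tags are immediate.
\end{enumerate}

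The induction shows that $G\satisfies_{\mathrm{fp}}\varphi^*[g]$ if and only if $G\satisfies\varphi^*[g]$. Lemma~\ref{lem:ring-with-parameter} then shows the latter holds if and only if $\langle\Z,+,\cdot,0,1\rangle\satisfies\varphi$. Equivalently, one can just repeat the argument of Lemma~\ref{lem:presburger-with-parameter}: $g$ has infinite order, so $i\mapsto g^i$ is a bijection $\Z\to\gen{g}$, and under it addition becomes multiplication and divisibility becomes cyclic membership.

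The one point needing care is the scope of Lemma~\ref{lem:cyclic-membership-fp}. Each $\possible$ or $\necessary$ in $\varphi^*$ occurs only inside a $\gen{\cdot}$-clause, so it is evaluated at $G$ itself. The only issue is that the modal operators are nested under first-order quantifiers ranging over $G$. The lemma applies to arbitrary elements $x,y$ of the finitely presented group $G$, so it covers every instantiation of those quantifiers, including elements that happen to lie in $\gen{g}$. There is therefore no nested modality evaluated at some other world that would need separate control.
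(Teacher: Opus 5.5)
Your proposal is correct and follows essentially the paper's proof: both observe that the only modal subformulas of $\varphi^*$ are the cyclic-membership clauses, namely the domain predicate $\code_x(y)$ and the divisibility atom $v\in\gen{u}$, and both use Lemma~\ref{lem:cyclic-membership-fp} to show that these clauses mean the same thing in the finitely presented frame as in the full frame. The only difference is cosmetic: you package this as a transfer induction from $\satisfies_{\mathrm{fp}}$ to $\satisfies$ and then cite Lemma~\ref{lem:ring-with-parameter}, whereas the paper reruns the Presburger induction directly in the finitely presented frame and then reapplies the definability of multiplication.
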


\begin{proof}
In the translation from Lemma~\ref{lem:presburger-with-parameter}, modal cyclic subgroup membership occurs in two places: in the coded-domain predicate
\[
\code_x(y)\equiv y\in\gen{x},
\]
which is used to relativize quantifiers, and in the divisibility clause
\[
(u\mid v)^*:=v\in\gen{u}.
\]
By Lemma~\ref{lem:cyclic-membership-fp}, both uses of cyclic subgroup membership have the same meaning in the finitely presented frame as in the full embedding frame, whenever the ambient group is finitely presented. Therefore, when $G$ is finitely presented and $g\in G$ has infinite order, the quantifier relativization ranges over exactly the coded copy $\gen{g}$ of $\Z$, and the divisibility atom has its intended arithmetic meaning. Hence the same inductive proof as in Lemma~\ref{lem:presburger-with-parameter} shows that
\[
\langle\Z,+,\mid,0,1\rangle\satisfies\psi
\quad\text{if and only if}\quad
G\satisfies_{\mathrm{fp}}\psi^*[g]
\]
for every sentence $\psi$ of the language $\{+,\mid,0,1\}$. Since multiplication is definable in $\langle\Z,+,\mid,0,1\rangle$ by Lemma~\ref{lem:multiplication}, the same recursive syntactic implementation used in Lemma~\ref{lem:ring-with-parameter} yields the displayed equivalence for ring-language sentences.
\end{proof}

\begin{corollary}\label{cor:arithmetic-in-fp-modal-group-theory}
For every finitely presented group $G$ and every sentence $\varphi$ of the ring language $\{+,\cdot,0,1\}$,
\[
\langle\Z,+,\cdot,0,1\rangle\satisfies\varphi
\quad\text{if and only if}\quad
G\satisfies_{\mathrm{fp}}\varphi^{\dagger}.
\]
\end{corollary}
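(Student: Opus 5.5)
We mirror the proof of Theorem~\ref{thm:arithmetic-in-modal-group-theory}, replacing each ingredient by its finitely presented counterpart. Recall that
\[
\varphi^{\dagger}=\possible\exists x\,(\neg(\ord(x)<\infty)\wedge \varphi^*(x)).
\]
Under $\satisfies_{\mathrm{fp}}$, the outer $\possible$ ranges over embeddings into finitely presented groups. Every subformula is therefore evaluated at a finitely presented group, so Corollary~\ref{cor:torsion-fp} and Lemma~\ref{lem:ring-with-parameter-fp} apply there.

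For the forward direction, assume $\langle\Z,+,\cdot,0,1\rangle\satisfies\varphi$. Let $G=\langle a_1,\dots,a_n\mid r_1,\dots,r_m\rangle$ be finitely presented. Then $G\times\Z$ is finitely presented, with presentation
\[
\langle a_1,\dots,a_n,t\mid r_1,\dots,r_m,\ ta_i=a_it\ (1\le i\le n)\rangle .
\]
The map $g\mapsto(g,0)$ is an embedding, so $G\times\Z$ is an accessible world. The element $x=(e_G,1)$ has infinite order. By Corollary~\ref{cor:torsion-fp}, $G\times\Z\satisfies_{\mathrm{fp}}\neg(\ord(x)<\infty)$. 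By Lemma~\ref{lem:ring-with-parameter-fp}, $G\times\Z\satisfies_{\mathrm{fp}}\varphi^*[x]$. Hence $G\satisfies_{\mathrm{fp}}\varphi^{\dagger}$.

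For the converse, assume $G\satisfies_{\mathrm{fp}}\varphi^{\dagger}$. Then there is a finitely presented $H$, an embedding $G\hookrightarrow H$, and some $x\in H$ satisfying both conjuncts in the fp frame. By Corollary~\ref{cor:torsion-fp}, applied in $H$, the element $x$ really has infinite order. Lemma~\ref{lem:ring-with-parameter-fp} then gives $\langle\Z,+,\cdot,0,1\rangle\satisfies\varphi$.

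The main point needing care is checking that everything stays inside the finitely presented frame. First, the witness world $G\times\Z$ must be finitely presented, which the explicit presentation above settles. Second, the lemmas must be applied at the world where the subformula is evaluated, namely $H$ or $G\times\Z$, and not at $G$. Both lemmas already state their conclusions for arbitrary finitely presented ambient groups, so this works once the worlds are identified correctly.

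Finally, $\varphi^*$ is the tagged formula from Lemma~\ref{lem:ring-with-parameter}. The tags are tautologies, so they do not affect truth in either frame.
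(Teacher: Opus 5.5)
Your proof is correct and follows the paper's argument. For the forward direction you embed $G$ into the finitely presented group $G\times\Z$ with witness $x=(e_G,1)$, and for the converse you apply Corollary~\ref{cor:torsion-fp} and Lemma~\ref{lem:ring-with-parameter-fp} at the witnessing finitely presented world $H$; your explicit presentation of $G\times\Z$ also makes precise the paper's bare assertion that this group is finitely presented.
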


\begin{proof}
Suppose first that $\langle\Z,+,\cdot,0,1\rangle\satisfies\varphi$, and let $G$ be finitely presented. Then $G\times\Z$ is finitely presented and $G$ embeds into $G\times\Z$ by $g\mapsto (g,0)$. Let $x:=(e_G,1)\in G\times\Z$. By Corollary~\ref{cor:torsion-fp}, the element $x$ satisfies $\neg(\ord(x)<\infty)$ in the finitely presented frame, and by Lemma~\ref{lem:ring-with-parameter-fp} we have $(G\times\Z)\satisfies_{\mathrm{fp}}\varphi^*(x)$. Hence $G\satisfies_{\mathrm{fp}}\varphi^{\dagger}$.

Conversely, suppose that $G\satisfies_{\mathrm{fp}}\varphi^{\dagger}$. Then there is a finitely presented group $H$, an embedding $j\colon G\to H$, and an element $x\in H$ such that
\[
H\satisfies_{\mathrm{fp}}\neg(\ord(x)<\infty)\wedge \varphi^*(x).
\]
By Corollary~\ref{cor:torsion-fp}, the element $x$ has infinite order in $H$. Lemma~\ref{lem:ring-with-parameter-fp} therefore yields $\langle\Z,+,\cdot,0,1\rangle\satisfies\varphi$.
\end{proof}

\begin{theorem}\label{thm:fp-computable-isomorphism}
The modal theory of finitely presented groups is computably isomorphic to true arithmetic.
\end{theorem}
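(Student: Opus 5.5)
The plan is to apply Myhill's isomorphism theorem in the form recalled above. It therefore suffices to produce one--one reductions in both directions between true arithmetic and the modal theory of finitely presented groups. Here the latter means the set of codes of modal sentences valid at every finitely presented group under $\satisfies_{\mathrm{fp}}$.

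\textbf{Modal fp theory to true arithmetic.} This direction is already supplied by Theorem~\ref{thm:fp-arithmetization}. The sentence map $\varphi\mapsto\widetilde{\varphi}_{\mathrm{fp}}$ is computable and injective on G\"odel codes. It also satisfies: $\varphi$ is valid in all finitely presented groups iff $\N\satisfies\widetilde{\varphi}_{\mathrm{fp}}$. Applying the totalization convention to this map gives a total computable injection on $\N$ witnessing the reduction. Non-sentence codes are sent to the contradictions $\delta_n$, which lie outside true arithmetic.

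\textbf{True arithmetic to modal fp theory.} I would use the same map $f(\sigma):=(\sigma^{\Z})^{\dagger}$ as in Corollary~\ref{cor:true-arithmetic-reduction}, but now with truth evaluated in the finitely presented frame. By Lemma~\ref{lem:true-arithmetic-to-Z}, $\N\satisfies\sigma$ iff $\langle\Z,+,\cdot,0,1\rangle\satisfies\sigma^{\Z}$. By Corollary~\ref{cor:arithmetic-in-fp-modal-group-theory}, the latter holds iff $G\satisfies_{\mathrm{fp}}(\sigma^{\Z})^{\dagger}$ for every finitely presented $G$, which is exactly validity in the finitely presented frame. The corollary gives the equivalence at each finitely presented $G$ separately; taking $G$ trivial settles the direction from non-validity to falsity. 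The map is computable and injective, being a composite of injective translations, so the totalization convention again yields a genuine one--one reduction.

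Combining the two reductions, Myhill's theorem gives a computable isomorphism. The one point needing care is that the totalized maps really are total injections on all of $\N$ that preserve both membership and non-membership. Sentence codes outside the range must be handled correctly, and the $\delta_n$ must lie outside the respective target theories. On the modal side, each $\delta_n$ is a contradiction in the group language and hence fails at every finitely presented group; on the arithmetic side, each $\delta_n$ is false in $\N$. Beyond this bookkeeping I expect no real obstacle, since all the substantive work was done in Theorem~\ref{thm:fp-arithmetization} and Corollary~\ref{cor:arithmetic-in-fp-modal-group-theory}.
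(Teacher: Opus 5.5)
Your proposal is correct and is essentially the paper's proof. The reduction of the fp modal theory to true arithmetic is the totalized map $\varphi\mapsto\widetilde{\varphi}_{\mathrm{fp}}$ from Theorem~\ref{thm:fp-arithmetization}, the reverse reduction goes through $(\sigma^{\Z})^{\dagger}$ via Lemma~\ref{lem:true-arithmetic-to-Z} and Corollary~\ref{cor:arithmetic-in-fp-modal-group-theory}, and Myhill's theorem finishes.

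The only difference is that the paper prefixes an extra tag $\theta_{\code(\sigma)}$ to secure injectivity. As you observe, this is redundant, because both constituent translations are already injective. One small point of phrasing: the trivial group gives falsity $\Rightarrow$ non-validity, while non-validity $\Rightarrow$ falsity needs the corollary at an arbitrary finitely presented $G$.
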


\begin{proof}
By Theorem~\ref{thm:fp-arithmetization} and the totalized form of the computable injective sentence map
\[
\varphi\mapsto \widetilde{\varphi}_{\mathrm{fp}}
\]
constructed in its proof, the modal theory of finitely presented groups is one--one reducible to true arithmetic.

For the converse reduction, fix a computable injective family $(\theta_n)_{n\in\N}$ of group-theoretic sentences valid in every group; for example,
\[
\theta_n:=\exists x_0\cdots\exists x_n\,\bigwedge_{i=0}^n (x_i=x_i).
\]
Now send a first-order arithmetic sentence $\sigma$ to
\[
f(\sigma):=\theta_{\code(\sigma)}\wedge (\sigma^{\Z})^{\dagger},
\]
where $\sigma^{\Z}$ is the computable injective translation from Lemma~\ref{lem:true-arithmetic-to-Z}. The map $f$ is computable and injective because the left conjunct syntactically records the G\"odel code of $\sigma$.

Let $G$ be any finitely presented group. By Lemma~\ref{lem:true-arithmetic-to-Z} and Corollary~\ref{cor:arithmetic-in-fp-modal-group-theory},
\[
\begin{aligned}
\langle\N,+,\cdot,0,1\rangle\satisfies\sigma
&\quad\text{if and only if}\quad
\langle\Z,+,\cdot,0,1\rangle\satisfies\sigma^{\Z}\\
&\quad\text{if and only if}\quad
G\satisfies_{\mathrm{fp}} (\sigma^{\Z})^{\dagger}.
\end{aligned}
\]
Since each $\theta_n$ is valid in every group, we therefore have
\[
\langle\N,+,\cdot,0,1\rangle\satisfies\sigma
\quad\text{if and only if}\quad
G\satisfies_{\mathrm{fp}} f(\sigma).
\]
As this equivalence holds for every finitely presented group $G$, it follows that
\[
\begin{aligned}
\langle\N,+,\cdot,0,1\rangle\satisfies\sigma
&\quad\text{if and only if}\quad
f(\sigma)\text{ is valid}\\
&\quad\text{in all finitely presented groups}.
\end{aligned}
\]
By the totalization convention above, this sentence map induces a genuine total one--one reduction of true arithmetic to the modal theory of finitely presented groups. Since the converse one--one reduction was established at the beginning of the proof, Myhill's isomorphism theorem in the form recalled above now yields a computable isomorphism.
\end{proof}

\subsection{Countable groups}

Throughout this subsection, modal truth is interpreted in the potentialist system of countable groups and embeddings between countable groups. Accordingly, if $G$ codes a countable group and $\bar a\in D_G^k$, then
\[
(G,\bar a)\satisfiesctbl\varphi
\]
means that $\varphi$ holds at the coded group under the assignment $\bar a$ when the modal operator $\possible$ quantifies only over embeddings into countable overgroups, equivalently over coded countable groups $H\subseteq\N$.

We code a countable group by a single set $G\subseteq\N$ using tagged pairs. Fix a primitive-recursive pairing function $\langle\cdot,\cdot\rangle$, and write
\[
\begin{aligned}
D_G(x) &\quad\text{if and only if}\quad \langle 0,x\rangle\in G,\\
M_G(x,y,z) &\quad\text{if and only if}\quad
\langle 1,\langle x,\langle y,z\rangle\rangle\rangle\in G.
\end{aligned}
\]
Thus $D_G$ is the coded domain and $M_G$ is the coded multiplication graph.

\begin{theorem}\label{thm:countable-groups-second-order}
For every countable group-code $G\subseteq\N$, every tuple $\bar a$ of natural numbers from the coded domain $D_G$, and every formula $\varphi(\bar x)$ in the modal language of groups, there is a primitive-recursive second-order arithmetic formula $\varphi^{\ddagger}(G,\bar a)$ such that
\[
(G,\bar a)\satisfiesctbl\varphi
\quad\text{if and only if}\quad
\langle\N,+,\cdot,0,1,\mathcal P(\N)\rangle\satisfies\varphi^{\ddagger}(G,\bar a).
\]
Consequently, the modal theory of countable groups under embeddings between countable groups is computably interpretable in true second-order arithmetic.
\end{theorem}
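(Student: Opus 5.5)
The plan is to mirror the proof of Theorem~\ref{thm:fp-arithmetization}, replacing word-codes and finite presentations by set-codes $G\subseteq\N$, and replacing arithmetic quantification over presentation codes by second-order quantification over subsets of $\N$. First I will define a first-order arithmetic formula $\mathrm{CGroup}(G)$, with $G$ a set parameter, asserting that $G$ codes a group. It will say that $D_G$ is nonempty, that $M_G$ is the graph of a total function $D_G\times D_G\to D_G$, and that this operation is associative, has an identity, and has inverses. Every clause here is arithmetical in $G$. Next I will define $\mathrm{CEmb}(G,f,H)$, where $f\subseteq\N$ codes the graph of a function via pairs. It will assert that $f$ is a total function $D_G\to D_H$, that $f$ is injective, and that $f$ respects the coded multiplications. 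This is again arithmetical in $G,f,H$.

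Then I will define $\varphi^{\ddagger}(G,\bar a)$ by recursion on $\varphi$, treating the parameters $\bar a$ as number variables. For atomic formulas I first flatten them, as in Lemma~\ref{lem:presburger-with-parameter}, into the forms $u=v$, $u=e$ and $uv=w$. These translate to $u=v$, to ``$u$ is the identity of $G$'', and to $M_G(u,v,w)$ respectively. Since flattening introduces quantifiers, each quantifier must be relativized to $D_G$. Boolean connectives translate verbatim, and the quantifiers $\exists y$ and $\forall y$ become $\exists y\,(D_G(y)\wedge\cdots)$ and $\forall y\,(D_G(y)\to\cdots)$. For the modal case I set
\[
(\possible\psi)^{\ddagger}(G,\bar a):=\exists H\,\exists f\,\bigl(\mathrm{CGroup}(H)\wedge\mathrm{CEmb}(G,f,H)\wedge\exists\bar b\,(\textstyle\bigwedge_i f(a_i)=b_i\wedge\psi^{\ddagger}(H,\bar b))\bigr),
\]
and $\necessary$ is handled as the abbreviation $\neg\possible\neg$. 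The map $\varphi\mapsto\varphi^{\ddagger}$ is a syntactic recursion, so it is primitive recursive.

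Correctness will be proved by induction on $\varphi$, uniformly in all codes $G$ and assignments $\bar a$. The atomic, Boolean and quantifier steps are immediate, because a code is an actual structure on $D_G$ and no representative-independence issue arises, unlike in the finitely presented case. The modal step is the one needing care. In one direction, a countable overgroup $j\colon G\hookrightarrow K$ is isomorphic to some coded $H\subseteq\N$, because $K$ is countable and we may enumerate it and transport its structure. The composite of $j$ with this isomorphism gives an $f$ satisfying $\mathrm{CEmb}$. Truth of $\psi$ is invariant under isomorphism, so the induction hypothesis applies to $H$. In the other direction, any witnesses $H,f$ yield a genuine countable overgroup and embedding. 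This step also requires that truth of modal formulas in $\satisfiesctbl$ be isomorphism-invariant. I will verify that separately by a quick induction, since isomorphisms compose with embeddings.

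For the final consequence, a modal sentence $\varphi$ is valid in all countable groups exactly when
\[
\forall G\,(\mathrm{CGroup}(G)\to\varphi^{\ddagger}(G))
\]
holds in full second-order arithmetic. Here I again use that every countable group is isomorphic to a coded group. The main obstacle is just this coding bookkeeping: fixing the codes so that the set quantifiers range over all isomorphism types of countable groups, including finite ones. That works because $D_G$ may be any nonempty subset of $\N$.
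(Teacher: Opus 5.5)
Your proposal is correct and follows essentially the paper's own proof. Both use arithmetical predicates for group-codes and embedding-graphs, a primitive-recursive translation that relativizes first-order quantifiers to $D_G$ and replaces $\possible$ by set quantification over a group-code $H$ and embedding-code, induction on formulas, and a universal set quantifier over group-codes for validity. The differences are cosmetic (you flatten atomic formulas instead of using the paper's term-evaluation formulas $\mathrm{Eval}_t$, and you do not fix the identity at $0$), and your explicit isomorphism-invariance check just spells out what the paper builds into its definition of $\satisfiesctbl$ as ranging equivalently over coded countable groups.
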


\begin{proof}
Any countable group $K$ can be coded in this way: choose an injection $i\colon K\to\N$ with $i(1_K)=0$, let $D_G$ be the range of $i$, and let $M_G(i(a),i(b),i(c))$ hold exactly when $ab=c$ in $K$. Conversely, every set $G\subseteq\N$ satisfying the corresponding second-order axioms determines a countable group with domain $D_G$ and multiplication $M_G$.

A second-order formula $\mathrm{Group}(G)$ asserts that $0\in D_G$, that $M_G$ is total and functional on $D_G$, and that it satisfies the group axioms with identity $0$ and inverses in $D_G$.

Given group-codes $G,H\subseteq\N$, an embedding from the coded group of $G$ into the coded group of $H$ is coded by a set $F\subseteq\N$, where $\langle x,y\rangle\in F$ means $f(x)=y$. A second-order formula $\mathrm{Emb}(G,F,H)$ says that $F$ is the graph of a total injective homomorphism from $D_G$ into $D_H$.

For each group term $t(\bar x)$ we define, by recursion on term complexity, a second-order arithmetic formula $\mathrm{Eval}_{t}(G,\bar x,y)$ asserting that $y$ is the value of $t$ in the group coded by $G$ under the assignment $\bar x\in D_G^k$. The defining clauses are
\[
\mathrm{Eval}_{x_i}(G,\bar x,y):=(D_G(y)\wedge y=x_i),\qquad \mathrm{Eval}_{e}(G,\bar x,y):=(y=0),
\]
and, for composite terms,
\[
\mathrm{Eval}_{u\cdot v}(G,\bar x,y):=\exists y_0\exists y_1\bigl(\mathrm{Eval}_{u}(G,\bar x,y_0)\wedge \mathrm{Eval}_{v}(G,\bar x,y_1)\wedge M_G(y_0,y_1,y)\bigr).
\]
The map $t\mapsto\mathrm{Eval}_{t}$ is primitive-recursive on G\"odel codes of terms.

Now define, by structural induction on formulas, second-order arithmetic formulas $\mathrm{Val}_{\psi}(G,\bar x)$. For an atomic equation $t_1=t_2$, let
\[
\mathrm{Val}_{t_1=t_2}(G,\bar x):=\exists y_1\exists y_2\bigl(\mathrm{Eval}_{t_1}(G,\bar x,y_1)\wedge \mathrm{Eval}_{t_2}(G,\bar x,y_2)\wedge y_1=y_2\bigr).
\]
Boolean connectives are translated directly, and first-order quantifiers are relativized to the coded domain:
\[
\mathrm{Val}_{\exists u\,\psi}(G,\bar x):=\exists u\bigl(D_G(u)\wedge \mathrm{Val}_{\psi}(G,\bar x,u)\bigr),
\]
\[
\mathrm{Val}_{\forall u\,\psi}(G,\bar x):=\forall u\bigl(D_G(u)\to \mathrm{Val}_{\psi}(G,\bar x,u)\bigr).
\]
Here the right-hand sides use the extended assignment tuple obtained by appending $u$ to $\bar x$. Since $\necessary\psi$ is, by definition, the abbreviation $\neg\possible\neg\psi$, it suffices to give the modal clause for $\possible$. If the free variables of $\psi$ are among $\bar x=(x_1,\dots,x_k)$, then for the modal operator set
\[
\begin{aligned}
\mathrm{Val}_{\possible\psi}(G,\bar x):={}&\exists H\exists F\exists y_1\cdots\exists y_k\Bigl(\mathrm{Group}(H)\wedge \mathrm{Emb}(G,F,H)\\
&\wedge \bigwedge_{i=1}^k \langle x_i,y_i\rangle\in F \wedge \mathrm{Val}_{\psi}(H,\bar y)\Bigr).
\end{aligned}
\]
where $\bar y=(y_1,\dots,y_k)$. Because $\mathrm{Emb}(G,F,H)$ says that $F$ is the graph of a total injective homomorphism from $D_G$ into $D_H$, the tuple $\bar y$ is exactly the image tuple of $\bar x$. Every clause in the recursive definition is primitive-recursive on G\"odel codes, so $\psi\mapsto\mathrm{Val}_{\psi}$ is primitive-recursive.

Set $\varphi^{\ddagger}(G,\bar a):=\mathrm{Val}_{\varphi}(G,\bar a)$. An induction on group terms shows that $\mathrm{Eval}_{t}(G,\bar a,b)$ holds exactly when $b$ is the value of $t$ in the countable group coded by $G$ under the assignment $\bar a\in D_G^k$. Using this, a second induction on the complexity of $\varphi$ yields the desired equivalence. The atomic and Boolean steps are immediate. The quantifier step uses the relativization to $D_G$. For the modal step, it suffices to consider $\possible\psi$, since $\necessary\psi$ is by definition the abbreviation $\neg\possible\neg\psi$. The witnesses $H$, $F$, and $\bar y$ decode to an actual embedding of coded countable groups carrying $\bar a$ to its image tuple inside a countable overgroup, and conversely any witnessing embedding in the countable frame yields suitable second-order parameters.

If $\varphi$ is a sentence, define
\[
\widehat\varphi:=\forall G\,(\mathrm{Group}(G)\to \varphi^{\ddagger}(G)).
\]
Then
\[
\varphi\text{ is valid in all countable groups}
\quad\text{if and only if}\quad
\langle\N,+,\cdot,0,1,\mathcal P(\N)\rangle\satisfies\widehat\varphi.
\]
This already gives the desired computable interpretation of the modal theory of countable groups under embeddings between countable groups in true second-order arithmetic. To obtain a one--one reduction, fix a computable family $(\eta_n)_{n\in\N}$ of pairwise syntactically distinct true second-order arithmetic sentences; for definiteness, let $\eta_n$ be the right-associated conjunction of $n+1$ copies of the true atomic sentence $0=0$. Now define
\[
\widetilde{\widehat\varphi}:=\eta_{\code(\varphi)}\wedge \widehat\varphi.
\]
Since each $\eta_n$ is true in $\langle\N,+,\cdot,0,1,\mathcal P(\N)\rangle$, we still have
\[
\varphi\text{ is valid in all countable groups}
\quad\text{if and only if}\quad
\langle\N,+,\cdot,0,1,\mathcal P(\N)\rangle\satisfies\widetilde{\widehat\varphi}.
\]
The map $\varphi\mapsto\widetilde{\widehat\varphi}$ is primitive-recursive. It is injective on G\"odel codes of sentences because $\varphi\mapsto\code(\varphi)$ is computable, the family $(\eta_n)_{n\in\N}$ is pairwise syntactically distinct, and the left conjunct $\eta_{\code(\varphi)}$ therefore syntactically records the code of $\varphi$ inside the fixed conjunction template. By the totalization convention above, this sentence map induces a genuine total one--one reduction of modal validity for the countable-frame semantics to truth in second-order arithmetic, and in particular shows that the modal theory of countable groups under embeddings between countable groups is computably interpretable in true second-order arithmetic.
\end{proof}

\begin{corollary}\label{cor:countable-groups-reduction}
The modal theory of countable groups under embeddings between countable groups is one--one reducible to true second-order arithmetic.
\end{corollary}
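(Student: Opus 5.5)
This is essentially a repackaging of Theorem~\ref{thm:countable-groups-second-order} together with the totalization convention. Let $\varphi\mapsto\widetilde{\widehat\varphi}$ be the sentence map constructed at the end of that proof. The key steps are as follows.

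\begin{enumerate}[label=\textup{(\roman*)}]
    \item Correctness of the map. For every modal group-theoretic sentence $\varphi$, $\varphi$ is valid in all countable groups (countable-frame semantics) if and only if $\langle\N,+,\cdot,0,1,\mathcal P(\N)\rangle\satisfies\widetilde{\widehat\varphi}$.
    \item Computability. The map is primitive-recursive on G\"odel codes, since $\psi\mapsto\mathrm{Val}_\psi$ is primitive-recursive and the outer wrapper $\forall G\,(\mathrm{Group}(G)\to\cdot)$ and the tag $\eta_{\code(\varphi)}$ are computed uniformly.
    \item Injectivity. The map is injective on sentence codes because the left conjunct $\eta_{\code(\varphi)}$ recovers $\code(\varphi)$ syntactically.
    \item Totalization. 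We apply the totalization convention with source the modal language of groups and target the language of second-order arithmetic. Codes of sentences are sent to codes of $(\exists z\,z=z)\wedge\widetilde{\widehat\varphi}$, and non-sentence codes $n$ are sent to codes of the contradictions $\delta_n$.
\end{enumerate}

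The convention needs a target-language existential $\exists z\,z=z$. We take $z$ to be a first-order (number) variable; it is true in the standard model, so $\delta_n$ is false there. Decidability of ``$n$ codes a modal group sentence'' makes the resulting function total computable. Injectivity holds because the $\delta_n$ are pairwise distinct and their outer form differs from the tagged image. Membership is preserved as well: non-sentence codes lie outside the modal theory, and each $\delta_n$ lies outside true second-order arithmetic.

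The only point requiring care is that the tagged outer forms are genuinely disjoint as G\"odel codes. This is fixed by the parenthesization convention already adopted in the paper, so there is no real obstacle. The result is a total injective computable $f\colon\N\to\N$ with $n$ in the modal theory of countable groups iff $f(n)$ is in true second-order arithmetic, which is the one--one reduction claimed.
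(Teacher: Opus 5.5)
Your proposal is correct and matches the paper's proof. The paper likewise obtains the corollary as the one--one reduction induced, via the totalization convention, by the injective sentence map $\varphi\mapsto\widetilde{\widehat\varphi}$ from Theorem~\ref{thm:countable-groups-second-order}; you simply spell out the checks the paper leaves implicit (correctness, computability, injectivity, and the handling of non-sentence codes by the contradictions $\delta_n$).
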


\begin{proof}
This is exactly the one--one reduction induced, by the totalization convention above, by the injective sentence map $\varphi\mapsto\widetilde{\widehat\varphi}$ constructed at the end of the proof of Theorem~\ref{thm:countable-groups-second-order}.
\end{proof}

\begin{question}\label{q:second-order-sharp}
Is the modal theory of countable groups under embeddings between countable groups computably isomorphic to true second-order arithmetic?
\end{question}

\begin{question}\label{q:exact-complexity}
What is the exact computability-theoretic complexity of modal group theory?
\end{question}

\section{Propositional modal validities of groups}

We now turn from expressive power to propositional modal validities. Berger, Block, and L\"owe proved that the class of abelian groups validates \SFourTwo\ and asked whether the same is true for all groups \cite{BBL23}. We answer that question in the affirmative for the natural parameter language.

For this section, let $L$ denote a language between $\Lgrp$ and $\LgrpModal$, and write $\GrpEmb$ for the category of groups and embeddings. If $A\subseteq G$, then $L_A$ denotes the class of assertions of $L$ with parameters from $A$, that is, the sentences of the expansion obtained by adding constant symbols naming the chosen parameters. More generally, if $\mathcal L$ is a class of formulas of $\LgrpModal$, then $\mathcal L_A$ denotes the corresponding class of assertions with parameters from $A$. Under an embedding $j\colon G\to H$, the constant naming $a\in A$ is interpreted in $H$ as $j(a)$.

A propositional modal formula $\sigma(p_0,\dots,p_n)$ is \emph{valid at $G$ for substitution instances from $\mathcal L_A$} if every substitution instance $\sigma(\psi_0,\dots,\psi_n)$ with $\psi_i\in\mathcal L_A$ is true at $G$. Equivalently, these are precisely the sentential substitution instances obtained after naming the relevant parameters by constants. We write $\Val_{\GrpEmb}(G,\mathcal L_A)$ for the resulting set of propositional modal validities. Thus $\Val_{\GrpEmb}(G,L)$ refers to the sentential regime with no parameters, while $\Val_{\GrpEmb}(G,L_G)$ refers to the formulaic regime of assertions with parameters from the base group.

We shall use the standard potentialist terminology relative to a base group $G$. A \emph{world above $G$} is a group $H$ equipped with an embedding $G\to H$. An assertion $b$ is a \emph{button over $G$} if
\[
G\satisfies\necessary\possible\necessary b.
\]
At a world $H$ above $G$, the button is \emph{pushed} if $H\satisfies\necessary b$, and otherwise it is \emph{unpushed}. A finite family $b_0,\dots,b_{m-1}$ of buttons is \emph{independent over $G$} if, in every world $H$ above $G$, every desired extension of the currently pushed pattern can be realized in a further world; that is, for every subset
\[
S\supseteq\{i<m\mid H\satisfies\necessary b_i\},
\]
there is a further extension $K$ of $H$ such that
\[
K\satisfies\necessary b_i
\quad\text{if and only if}\quad
 i\in S.
\]
A finite list of assertions $d_0,\dots,d_{n-1}$ is an $n$-\emph{dial over $G$} if, in every world above $G$, exactly one of the $d_i$ holds and each dial value can be made true in a further extension. Equivalently, the partition is necessary and
\[
G\satisfies\necessary\left(\bigwedge_{i<n}\possible d_i\right).
\]
We say that $G$ admits arbitrarily long finite dials if it has such a dial of length $n$ for every $n\geq 2$. A family of buttons and a dial are \emph{independent over $G$} if, from every world above $G$, any desired extension of the currently pushed button pattern can be realized together with any desired dial value in a further world.

We shall use the following general facts from modal model theory and from the category-of-sets analysis of the broader concrete-categorical framework \cite{HW24,WSet}. Here an \emph{allowed substitution class} means a nonempty class $\mathcal L$ of formulas of $\LgrpModal$ that is closed under Boolean connectives. A span above a group $H$ means a pair of embeddings $j_0\colon H\to H_0$ and $j_1\colon H\to H_1$, and it amalgamates if there are embeddings $k_0\colon H_0\to K$ and $k_1\colon H_1\to K$ with $k_0j_0=k_1j_1$.

\begin{proposition}[General modal bounds]\label{prop:general-bounds}
Let $G$ be a group, let $A\subseteq G$, and let $\mathcal L$ be an allowed substitution class. In the category of groups and embeddings:
\begin{enumerate}[label=\textup{(\arabic*)}]
    \item if every span above every extension of $G$ amalgamates, then $\SFourTwo\subseteq \Val_{\GrpEmb}(G,\mathcal L_A)$;
    \item if $G$ admits arbitrarily long finite dials and $\mathcal L_A$ contains the corresponding dial statements, then $\Val_{\GrpEmb}(G,\mathcal L_A)\subseteq \SFive$;
    \item if $G$ admits arbitrarily large finite independent families of unpushed buttons together with arbitrarily long finite dials, with the buttons and dials mutually independent, and $\mathcal L_A$ contains the corresponding button and dial statements, then $\Val_{\GrpEmb}(G,\mathcal L_A)\subseteq \SFourTwo$.
\end{enumerate}
\end{proposition}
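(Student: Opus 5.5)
These are the standard modal-model-theory bounds, and I would prove each item by transferring the usual Kripke-frame arguments to the potentialist system of worlds above $G$, as in \cite{HW24,WSet}.

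\textbf{Item (1).} I would first check the axioms \theoryf{K}, \theoryf{T} and \theoryf{4} semantically.
\begin{enumerate}[label=\textup{(\roman*)}]
\item \theoryf{K} holds because modal truth at a world is evaluated by ordinary quantification over further embeddings.
\item \theoryf{T} holds because the identity map is an embedding.
\item \theoryf{4} holds because a composite of embeddings is an embedding.
\end{enumerate}
Parameters from $A$ are transported along embeddings by composition, so these facts persist for assertions in $\mathcal L_A$.

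For the axiom \theoryf{.2}, namely $\possible\necessary\psi\to\necessary\possible\psi$, I would argue as follows. Suppose $H$ lies above $G$ and $H\satisfies\possible\necessary\psi$, witnessed by $j_0\colon H\to H_0$. Let $j_1\colon H\to H_1$ be an arbitrary extension. Amalgamate the span to $K$ via $k_0,k_1$. Then $K\satisfies\psi$, because $\psi$ is necessary at $H_0$ and $k_0$ is an embedding. The parameters agree in $K$ since $k_0j_0=k_1j_1$. Hence $H_1\satisfies\possible\psi$, as required.

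Because the validity set is closed under modus ponens and necessitation, \SFourTwo\ is contained in it. Necessitation needs care: a substitution instance is valid at $G$ means it is true at $G$, so I would prove the stronger claim that it is true at every world above $G$. This works because the hypothesis is stated for spans above every extension.

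\textbf{Item (2).} Suppose $\sigma\notin\SFive$. Then $\sigma$ fails at some world of a finite \SFive\ frame, that is, a finite cluster of $n$ worlds with a valuation. I would map world $i$ to the dial value $d_i$ and substitute for each $p$ the disjunction of the $d_i$ over the worlds $i$ where $p$ is true. I would then prove by induction on subformulas that truth at a world above $G$ where $d_i$ holds matches truth at world $i$. The key facts are that the dial values partition necessarily and that every value is possible from every world. Since $G$ itself satisfies some $d_i$, relabeling the worlds so that the failing world is the current one gives a substitution instance false at $G$. Closure of $\mathcal L_A$ under Booleans is used here.

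\textbf{Item (3).} By the known completeness result, \SFourTwo\ is complete with respect to finite pre-Boolean algebras, that is, finite Boolean lattices of clusters. I would realize such a frame by taking one button for each atom of the Boolean lattice. The pushed pattern then gives the lattice element, and a dial indexes the world within a cluster. Independence of the buttons and dials, together with their being unpushed at $G$, makes the mapping from worlds above $G$ to frame points a labeling with the required back-and-forth property. The truth lemma then goes through as in Hamkins–Löwe style arguments.

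The main obstacle is this labeling step in item (3). Clusters at different lattice levels may have different sizes, so I would either use pre-Boolean frames with uniform cluster size $n$ or collapse the dial modulo the cluster size at each level. I would also have to verify carefully that accessibility is respected when a button is pushed and the dial changes simultaneously; the mutual independence hypothesis is exactly what makes this work.
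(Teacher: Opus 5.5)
Your proposal is correct and reconstructs the standard arguments behind this proposition; the paper itself gives no proof of Proposition~\ref{prop:general-bounds} but cites \cite{HW24,WSet}, where (1) comes from amalgamation making the worlds above $G$ a reflexive, transitive, directed frame, (2) from labeling a finite cluster by dial values, and (3) from labeling a finite pre-Boolean algebra by pushed-button patterns together with (collapsed) dial values, just as you describe. Two points should be made explicit. In (1), verify the axioms as schemata for arbitrary modal combinations of $\mathcal L_A$-assertions (or argue directly via the directed frame), since $\mathcal L_A$ need not be closed under $\possible$ and closure under uniform substitution is otherwise not automatic. In (3), your labels are Boolean combinations of the statements $\necessary b_i$, so these must lie in $\mathcal L_A$; this is automatic for pure buttons such as the $b_p$ used later, where $b_p\leftrightarrow\necessary b_p$.
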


\subsection{The \texorpdfstring{$\SFourTwo$}{S4.2} lower bound}

\begin{theorem}\label{thm:lower-bound-S42}
Every group validates \SFourTwo\ for arbitrary assertions, even formulaic assertions with parameters in the full modal language of groups. In particular, \SFourTwo\ is valid for formulaic substitutions from every intermediate language between $\Lgrp$ and $\LgrpModal$, with parameters allowed.
\end{theorem}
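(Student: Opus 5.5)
The plan is to apply Proposition~\ref{prop:general-bounds}(1). It then suffices to show that every span of group embeddings amalgamates. We may take $\mathcal L$ to be all of $\LgrpModal$, which is closed under Boolean connectives, and $A=G$. Since the hypothesis of (1) mentions no language, the same argument covers every intermediate language between $\Lgrp$ and $\LgrpModal$, with or without parameters. Each such class is itself an allowed substitution class, and validity for a larger class of substitution instances implies validity for any smaller one.

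The amalgamation step is the free product with amalgamation. Given a group $H$ and embeddings $j_0\colon H\to H_0$ and $j_1\colon H\to H_1$, form
\[
K=H_0*_H H_1=\langle H_0,H_1\mid j_0(h)=j_1(h)\text{ for all }h\in H\rangle .
\]
By the normal form theorem for amalgamated free products (see \cite{LyndonSchupp}), the canonical maps $k_0\colon H_0\to K$ and $k_1\colon H_1\to K$ are injective. By construction $k_0j_0=k_1j_1$. Hence every span above every group amalgamates, and in particular every span above every extension of $G$ does.

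For completeness, I would also recall why amalgamation yields \SFourTwo, in case one wants to avoid relying on the cited general fact. Axioms K and T hold because the identity is an embedding, and 4 holds because embeddings compose. For axiom .2, $\possible\necessary\varphi\to\necessary\possible\varphi$, suppose $G\to H_0$ with $H_0\satisfies\necessary\varphi$, and let $G\to H_1$ be arbitrary. Amalgamate to get $K$ above both, with parameters carried compatibly because $k_0j_0=k_1j_1$. Then $K\satisfies\varphi$ because $K$ extends $H_0$, so $H_1\satisfies\possible\varphi$. The only delicate point is that the two images of each parameter in $K$ agree, and this is exactly the commutativity of the amalgamation square. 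So there is no real obstacle beyond quoting the embedding theorem for amalgamated free products.
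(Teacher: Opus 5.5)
Your proposal is correct and follows the paper's proof: amalgamate every span by the free product with amalgamation, apply Proposition~\ref{prop:general-bounds}(1) with $\mathcal L$ the full modal language and $A=G$, and note that the intermediate languages follow immediately. Your supplementary check of the axiom $\possible\necessary\varphi\to\necessary\possible\varphi$ just unpacks the general fact from modal model theory that the paper cites rather than reproves.
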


\begin{proof}
The category of groups and embeddings has the amalgamation property: every span of embeddings $A\hookleftarrow C\hookrightarrow B$ has an amalgam given by the free product with amalgamation $A*_C B$. Fix a group $G$, and let $\mathcal L$ be the class of all formulas of $\LgrpModal$. Applying Proposition~\ref{prop:general-bounds}(1) with parameter set $A=G$ and with this allowed substitution class, we obtain
\[
\SFourTwo\subseteq \Val_{\GrpEmb}(G,\mathcal L_G).
\]
This is exactly the asserted full-language lower bound, and the claim for intermediate languages is immediate.
\end{proof}

\subsection{A sentential \texorpdfstring{$\SFive$}{S5} upper bound}

\begin{theorem}\label{thm:sentential-upper-bound-S5}
For every group $G$ and every language $L$ satisfying $\Lgrp\subseteq L\subseteq\LgrpModal$,
\[
\Val_{\GrpEmb}(G,L)\subseteq \SFive.
\]
In particular, the sentential propositional validities of any group are contained in \SFive.
\end{theorem}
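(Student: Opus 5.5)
We apply Proposition~\ref{prop:general-bounds}(2): it suffices to show that every group $G$ admits arbitrarily long finite dials expressible by parameter-free first-order sentences of $\Lgrp$. Such sentences lie in every $L$ with $\Lgrp\subseteq L\subseteq\LgrpModal$, and $L$ is closed under Boolean combinations, so the dial statements are available as substitution instances. This gives $\Val_{\GrpEmb}(G,L)\subseteq\SFive$ in all cases.

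For the dial we need a first-order property of a group that can be moved up and down freely by passing to overgroups. The center, which the introduction already showed behaves like a switch, is the natural candidate. Let $\zeta_k$ be the first-order sentence ``the center has at least $k$ elements''. For $n\geq 2$ define
\[
d_i:=\zeta_{2^i}\wedge\neg\zeta_{2^i+1}\quad(i<n-1),\qquad d_{n-1}:=\neg\bigvee_{i<n-1}d_i .
\]
So $d_i$ says that $|Z|=2^i$ for $i<n-1$, and $d_{n-1}$ says that the center does not have one of these sizes. These sentences are pairwise exclusive and jointly exhaustive in every group, so exactly one of them holds in every world.

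It remains to show $G\satisfies\necessary\bigwedge_{i<n}\possible d_i$. Take any world $H$ above $G$. First embed $H$ into $H*\Z$, which has trivial center because a nontrivial free product is centerless. Then embed into $(H*\Z)\times C_2^{\,i}$, whose center is $C_2^{\,i}$ of size $2^i$. This makes $d_i$ true for $i<n-1$. For $d_{n-1}$, use $(H*\Z)\times C_3$, whose center has size $3$. That size is not a power of two, so $d_{n-1}$ holds; when $n=2$ the size $3\neq 1$ works just as well. Every step is a group embedding, so each $d_i$ is possible over $H$, and since $H$ was arbitrary the dial condition holds over $G$.

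The only point needing care is the center computations. The center of a direct product is the product of the centers. The free product $H*\Z$ has trivial center even when $H$ is trivial, since then it is $\Z$... that case fails, because $\Z$ is abelian. So I would use $H*F_2$ instead, exactly as in the introduction. $F_2$ is centerless, and $H*F_2$ is a nontrivial free product with a non-$C_2$ factor, hence centerless. With this substitution the argument goes through, and the main obstacle is simply verifying that the dial is genuinely parameter-free and first-order, which it is.
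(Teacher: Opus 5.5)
Your proof is correct and is essentially the paper's argument: you apply Proposition~\ref{prop:general-bounds}(2) with a parameter-free dial on the size of the center, realized by $H*F_2$ times a finite abelian group. The paper uses center sizes $1,\dots,N-1$ together with ``at least $N$'', realized by factors $C_n$, while you use powers of two and $C_3$ for the catch-all value, which works equally well. In a final write-up, drop the false start with $H*\Z$, and also drop the caveat about $C_2$ factors: any free product of two nontrivial groups, including $C_2*C_2$, is centerless.
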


\begin{proof}
By Proposition~\ref{prop:general-bounds}(2), applied with parameter set $A=\varnothing$ and the allowed substitution language consisting of formulas of $L$, it suffices to construct arbitrarily long finite dials. Fix $N\geq 2$. For a group $H$, let $d_1$ assert that the center of $H$ is trivial, let $d_n$ for $1<n<N$ assert that the center has size exactly $n$, and let $d_{\geq N}$ assert that the center has size at least $N$. Exactly one of these statements holds in any group.

We claim that from any group $H$ we can realize any desired dial value. To realize $d_1$, embed $H$ into $H*F_2$, whose center is trivial. To realize $d_n$ for $1<n<N$, embed $H$ into $(H*F_2)\times C_n$. Since the center of $H*F_2$ is trivial, the center of $(H*F_2)\times C_n$ is exactly $C_n$, of size $n$. Finally, $(H*F_2)\times C_N$ realizes $d_{\geq N}$. Hence every group admits arbitrarily long finite dials.
\end{proof}

\begin{theorem}\label{thm:S5-worlds}
Every group embeds into a group $N$ that validates \SFive\ for the full modal language with parameters from $N$. Equivalently, for every modal formula $\varphi(\bar x)$ of $\LgrpModal$ and every tuple $\bar a\in N$,
\[
N\satisfies \possible\necessary\varphi[\bar a]\to \varphi[\bar a].
\]
That is, every group embeds into a world satisfying the potentialist maximality principle.
\end{theorem}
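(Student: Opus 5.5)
We build $N$ as the union of an $\omega$-length chain of extensions, where at each stage we push every button that is still pushable. The key point is that, in a world validating \SFourTwo\ (Theorem~\ref{thm:lower-bound-S42}), the maximality principle $\possible\necessary\varphi\to\varphi$ for all parameter assertions is equivalent to \SFive. So it suffices to build $N$ in which every assertion $\varphi[\bar a]$ with $\bar a\in N$ that is possibly necessary is already necessary. Such an assertion is then true, by reflexivity. The subtlety is that $N$ must be a genuine group and the modal truths at $N$ must be evaluated in the full category, not along the chain.

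The construction runs as follows. Given $G$, build a chain $G=N_0\subseteq N_1\subseteq\cdots$ of length an appropriate regular cardinal $\kappa$, with unions at limits. At successor stages we handle one pair $(\varphi,\bar a)$ with $\bar a\in N_\alpha$: if $N_\alpha\satisfies\possible\necessary\varphi[\bar a]$, choose $N_{\alpha+1}\supseteq N_\alpha$ with $N_{\alpha+1}\satisfies\necessary\varphi[\bar a]$. Buttons once pushed stay pushed in all further extensions, since $\necessary\varphi\to\necessary\necessary\varphi$ by S4. Use a bookkeeping so that every pair with parameters from the final union is treated at some stage after its parameters appear. Counting is fine: there are only $|N_\alpha|+\aleph_0$ pairs at each stage, so taking $\kappa$ regular and larger than the sizes involved, every parameter tuple from $N=\bigcup_{\alpha<\kappa}N_\alpha$ lies in some $N_\alpha$.

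To verify the result, let $\bar a\in N$ with $N\satisfies\possible\necessary\varphi[\bar a]$, and pick $\alpha$ with $\bar a\in N_\alpha$. The obstacle is transferring possible necessity from $N$ back down to $N_\alpha$ or some later $N_\beta$. Embeddability goes upward, but $N\satisfies\possible\necessary\varphi$ concerns extensions of $N$, and those are in particular extensions of every $N_\beta$. Hence $N_\beta\satisfies\possible\necessary\varphi[\bar a]$ holds for every $\beta\geq\alpha$. This is the easy direction, and it is exactly what we need. At the stage $\beta$ where $(\varphi,\bar a)$ is handled, we made $N_{\beta+1}\satisfies\necessary\varphi[\bar a]$. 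Since $N$ extends $N_{\beta+1}$, we get $N\satisfies\necessary\varphi[\bar a]$, hence $N\satisfies\varphi[\bar a]$.

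The main remaining care is set-theoretic: $\possible$ quantifies over a proper class, but for a fixed set-sized $N_\alpha$ we need only one witness per pair, so choice over the set of pairs suffices. Note that no amalgamation is even needed beyond the \SFourTwo\ reduction. Finally, since $\possible\necessary\varphi\to\varphi$ holds at $N$ for all parametrized formulas, and the same holds at every extension of $N$ trivially for assertions that were necessary at $N$, we conclude that $N$ validates \SFive\ together with Theorem~\ref{thm:lower-bound-S42}. The equivalence of the maximality principle with \SFive\ validity over \SFourTwo\ is the standard fact quoted from \cite{HW24}.
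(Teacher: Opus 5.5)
You have the right strategy, and it is essentially the argument behind the Hamkins--Wo\l oszyn theorem that the paper simply cites. Iteratively push every button $\possible\necessary\varphi[\bar a]$ that is still pushable, take unions at limit stages, and check the maximality principle at the union. The limit stages are where the $\forall\exists$-axiomatizability in the paper's citation enters, since a union of a chain of groups is a group. Your verification step is correct. If $N\satisfies\possible\necessary\varphi[\bar a]$, then by composing embeddings every stage containing $\bar a$ also satisfies $\possible\necessary\varphi[\bar a]$. So the pair is pushed wherever it is handled, and the resulting necessity persists up to $N$ and gives $\varphi[\bar a]$ by reflexivity.

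The gap is in the bookkeeping. You handle one pair per successor stage along a chain of length $\kappa$, so at most $\kappa$ pairs are ever treated, but nothing bounds $|N|$ by $\kappa$. The witness $N_{\alpha+1}$ to $\possible\necessary\varphi[\bar a]$ may have to be far larger than $N_\alpha$. You cannot shrink it with a L\"owenheim--Skolem argument: passing to a smaller subgroup enlarges the class of extensions over which $\necessary$ quantifies, so necessity does not transfer down. Hence ``take $\kappa$ regular and larger than the sizes involved'' is circular: the sizes are produced by the $\kappa$-length construction itself. Making it precise would need a regular cardinal closed under an uncontrolled witness-size function, which you have no means of obtaining. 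Regularity is also beside the point for the conclusion you draw from it, since a finite tuple from the union of any limit-length chain already lies in some stage.

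The repair is the shape announced in your own first sentence. Build an $\omega$-chain $N_0\subseteq N_1\subseteq\cdots$, where $N_{n+1}$ arises from $N_n$ by a transfinite sub-chain. That sub-chain runs through all pairs with parameters in $N_n$, pushes each pair that is possibly necessary at the current sub-stage, and takes unions at limits. Choose witnesses of minimal rank, since each witness class is proper. Every pair with parameters in $N=\bigcup_n N_n$ is then treated before some $N_{n+1}$, and your verification goes through verbatim.

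Finally, the passage from the maximality principle to \SFive\ needs only S4, not \SFourTwo. Applying the principle to $\necessary\chi$ gives $\possible\necessary\chi\to\necessary\chi$ at $N$. This is the step that makes your closing remark about extensions of $N$ work: the principle then holds throughout the cone above $N$, which is closed under necessitation.
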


\begin{proof}
This is an instance of the general theorem from modal model theory that every model of a $\forall\exists$ theory embeds into a world validating \SFive\ for the full parameter language \cite{HW24}. First-order group theory is $\forall\exists$-axiomatizable.
\end{proof}

For first-order group-theoretic assertions with parameters, this characterization is exact: the groups validating \SFive\ for substitution instances from $\Lgrp$ with parameters are precisely the existentially closed groups \cite{HW24}. It is natural to ask whether this maximality phenomenon persists beyond the purely first-order substitution regime. This leads to the following strengthened question.

\begin{question}\label{q:ec-S5}
Do all existentially closed groups validate \SFive\ for assertions in the full modal language $\LgrpModal$ with parameters?
\end{question}

\subsection{The exact formulaic modal theory}

A group has \emph{finite prime torsion spectrum} if only finitely many primes $p$ occur as orders of elements of that group. If $L$ is a language satisfying $\Lgrp\subseteq L\subseteq\LgrpModal$, we write $L_G$ for the associated class of assertions of $L$ with parameters from $G$.

\begin{theorem}\label{thm:exact-S42}
Let $G$ be a group with finite prime torsion spectrum, and let $L$ be a language satisfying $\Lgrp\subseteq L\subseteq\LgrpModal$. Then the propositional modal validities of $G$ with respect to formulaic substitution instances, that is, substitution instances by assertions with parameters from $G$, are exactly \SFourTwo. Equivalently,
\[
\Val_{\GrpEmb}(G,L_G)=\SFourTwo.
\]
\end{theorem}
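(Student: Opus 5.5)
The inclusion $\SFourTwo\subseteq\Val_{\GrpEmb}(G,L_G)$ is already given by Theorem~\ref{thm:lower-bound-S42}, so only the upper bound needs work. For it I would apply Proposition~\ref{prop:general-bounds}(3) with $A=G$ and with the allowed substitution class consisting of the assertions of $L_G$. The buttons and the dial I build will be parameter-free first-order sentences of $\Lgrp$, so they lie in $L_G$ for every intermediate $L$. What remains is to exhibit, for every $m$ and $n$, $m$ independent unpushed buttons over $G$ and an $n$-dial over $G$ that are mutually independent.

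\emph{Buttons.} Since $G$ has finite prime torsion spectrum, I can choose $m$ distinct odd primes $p_0,\dots,p_{m-1}$, none of which is the order of an element of $G$. Let $b_i$ say ``there is an element of order $p_i$'', that is, $\exists x\,(x\neq e\wedge x^{p_i}=e)$. This is an existential sentence, so once true it stays true in every extension. Hence $b_i$ is pushed at a world $H$ exactly when $H$ has an element of order $p_i$, and it is pushable from any world by passing to $H\times C_{p_i}$. By the choice of the $p_i$, each $b_i$ is unpushed at $G$.

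\emph{Dial.} For $k<n-1$ let $d_k$ say that the center has exactly $2^k$ elements, and let $d_{n-1}$ be the negation of their disjunction. These are first-order, and in every group exactly one of them holds.

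\emph{Joint realization.} Let $H$ be a world above $G$, let $S$ contain the set of button indices pushed at $H$, and fix a target dial value $k<n$. I would take
\[
K=\Bigl(\bigl(H\times\textstyle\prod_{i\in S}C_{p_i}\bigr)*F_2\Bigr)\times C_{2^k},
\]
with $k=n-1$ giving center of size $2^{n-1}$, which realizes $d_{n-1}$. The free product with $F_2$ has trivial center, so the center of $K$ is exactly $C_{2^k}$ and the dial value is correct. Every $b_i$ with $i\in S$ holds in $K$.

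The key step, and the main obstacle, is to show that no unwanted button gets pushed. I need that every element of prime order $q$ in $K$ has $q$ equal to $2$, to one of the $p_i$ with $i\in S$, or to a prime already in the spectrum of $H$. I would argue this in three pieces:
\begin{enumerate}[label=\textup{(\roman*)}]
    \item In a direct product $X\times Y$, an element $(x,y)$ of prime order $q$ has $x^q=e$ and $y^q=e$. So $q$ is the order of $x$ in $X$ or the order of $y$ in $Y$.
    \item In a free product, every element of finite order is conjugate into a factor, by the torsion theorem for free products \cite{LyndonSchupp}. Hence $*F_2$ adds no new torsion primes.
    \item The button primes are odd and not in $S$ for any $b_j$ unpushed at $H$, so none of the added primes can push such a $b_j$.
\end{enumerate}
Therefore the buttons pushed at $K$ are exactly those indexed by $S$, and the dial reads $k$.

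This establishes independence of the buttons, independence of the dial, and the mutual independence of the two, from every world above $G$. Proposition~\ref{prop:general-bounds}(3) then gives $\Val_{\GrpEmb}(G,L_G)\subseteq\SFourTwo$, and combined with the lower bound this yields equality. The hypothesis of finite prime torsion spectrum is used only to guarantee arbitrarily many unpushed button primes at $G$ itself.
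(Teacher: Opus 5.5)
Your proposal is correct and follows essentially the same route as the paper. The lower bound comes from Theorem~\ref{thm:lower-bound-S42}, and the upper bound comes from Proposition~\ref{prop:general-bounds}(3), with buttons ``there is an element of order $p$'' for primes outside the torsion spectrum of $G$ and a dial read off the size of the center, realized by $*F_2$ followed by a cyclic direct factor. The differences are cosmetic and both versions work: the paper pushes buttons with $B_p=C_p*F_2$ (leaving the center unchanged) and takes button primes larger than $N$, whereas you push with plain $C_{p_i}$ before applying $*F_2$ and pair $2$-power centers with odd button primes.
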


\begin{proof}
Suppose $G$ has finite prime torsion spectrum. This guarantees an infinite reservoir of button primes. The lower bound is Theorem~\ref{thm:lower-bound-S42}. For the upper bound, apply Proposition~\ref{prop:general-bounds}(3) with parameter set $A=G$ and with the allowed substitution class consisting of formulas of $L$. It therefore suffices to construct arbitrarily large finite independent families of buttons together with arbitrarily long finite dials.

For a prime $p$, let $b_p$ be the sentence ``there exists an element of prime order $p$.'' From a world $H$ in the cone above $G$, push $b_p$ by embedding $H$ into $H\times B_p$, where
\[
B_p:=C_p*F_2,
\]
via $h\mapsto (h,e_{B_p})$. In a free product every torsion element is conjugate into a factor, and the center of a nontrivial free product is trivial. Hence $B_p$ has torsion of prime order exactly $p$ and trivial center. Thus $H\times B_p$ satisfies $b_p$. Since finite order is preserved by embeddings, once pushed, the button remains pushed in all further extensions.

For dials, let $N\geq 2$ be arbitrary. Define $d_1$ to say that the center is trivial, define $d_n$ for $1<n<N$ to say that the center has size exactly $n$, and let $d_{\geq N}$ say that the center has size at least $N$. Exactly one of $d_1,\dots,d_{N-1},d_{\geq N}$ holds in any group. From any world $H$, these values can be realized by the embeddings
\[
H\hookrightarrow H*F_2,\qquad H\hookrightarrow (H*F_2)\times C_n\ \text{ for } 1<n<N,\qquad H\hookrightarrow (H*F_2)\times C_N.
\]
Hence dials of every finite length $N\geq 2$ exist.

Now fix $m\in\N$ and choose distinct primes $p_1,\dots,p_m$ greater than $N$ such that $G$ has no element of order $p_i$ for any $i$. Let $P=\{p_1,\dots,p_m\}$.

The dial moves do not affect the buttons $b_p$ for $p\in P$. Indeed, $F_2$ is torsion-free, and in a free product every torsion element is conjugate into a factor, so $H*F_2$ introduces no new torsion beyond the torsion already present in $H$. Likewise, $(H*F_2)\times C_n$ introduces, beyond the primes already present in $H$, only primes dividing $n$, and $(H*F_2)\times C_N$ introduces, beyond the primes already present in $H$, only primes dividing $N$. Since each $p\in P$ is greater than $N$, no dial move can create an element of order $p$ if it was not already present.

Conversely, pushing a button does not affect the dial. Since $Z(B_p)=1$,
\[
Z(H\times B_p)=Z(H)\times Z(B_p)=Z(H)\times 1,
\]
so the size of the center is unchanged when we pass from $H$ to $H\times B_p$. Moreover, if $q\neq p$, then $B_p=C_p*F_2$ has no elements of order $q$, so passing from $H$ to $H\times B_p$ does not create any new elements of order $q$. Hence pushing $b_p$ does not affect any other button $b_q$ with $q\neq p$.

Thus the buttons $\{b_p\mid p\in P\}$ are mutually independent and independent of the dial. Indeed, given any world $H$ in the cone above $G$, let $S\subseteq P$ be any desired button pattern extending the buttons already pushed in $H$, and let
\[
T:=\{p\in S\mid H\satisfies\neg b_p\}
\]
be the additional buttons to be pushed. Given also any target dial value among $d_1,\dots,d_{N-1},d_{\geq N}$, first push all $b_p$ for $p\in T$ via
\[
H\hookrightarrow H\times \prod_{p\in T} B_p.
\]
Let
\[
K:=H\times \prod_{p\in T} B_p.
\]
Then set the dial from $K$ by one of the embeddings
\[
K\hookrightarrow K*F_2,\qquad K\hookrightarrow (K*F_2)\times C_n\ \text{ for } 1<n<N,\qquad K\hookrightarrow (K*F_2)\times C_N.
\]
All these maps are canonical factor embeddings and hence injective, so finite orders are preserved throughout. Since $m$ and $N$ were arbitrary, Proposition~\ref{prop:general-bounds}(3) yields
\[
\Val_{\GrpEmb}(G,L_G)\subseteq \SFourTwo.
\]
Combining this with the lower bound gives equality.
\end{proof}

\begin{corollary}\label{cor:category-S42}
Let $L$ be a language satisfying $\Lgrp\subseteq L\subseteq\LgrpModal$. Then the exact propositional modal theory of the category of groups under embeddings, with formulaic substitutions from $L$ (equivalently, with assertions in $L$ and parameters allowed), is \SFourTwo.
\end{corollary}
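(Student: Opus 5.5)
The plan is to read ``the exact propositional modal theory of the category'' as the set of propositional modal formulas $\sigma$ that are valid at every group $G$ for formulaic substitution instances from $L$, that is, the intersection $\bigcap_G \Val_{\GrpEmb}(G,L_G)$. I would then establish the two inclusions separately, each time specializing an earlier result.

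\emph{Lower bound.} Theorem~\ref{thm:lower-bound-S42} gives $\SFourTwo\subseteq\Val_{\GrpEmb}(G,L_G)$ for every group $G$ and every intermediate language $L$, with parameters allowed. Taking the intersection over all groups yields $\SFourTwo\subseteq\bigcap_G\Val_{\GrpEmb}(G,L_G)$. Nothing further is needed here, since amalgamation via free products with amalgamation is already built into that theorem.

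\emph{Upper bound.} It suffices to exhibit a single group $G$ with $\Val_{\GrpEmb}(G,L_G)\subseteq\SFourTwo$, because the intersection is contained in each of its members. I would take $G$ to be the trivial group, or equally $\Z$ or any torsion-free group. Such a group has no elements of prime order, so it trivially has finite prime torsion spectrum. Theorem~\ref{thm:exact-S42} then gives $\Val_{\GrpEmb}(G,L_G)=\SFourTwo$. Hence the intersection is contained in $\SFourTwo$, and combining with the lower bound gives equality.

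I expect no serious obstacle. The only point to check is that the corollary's parenthetical equivalence (``formulaic substitutions'' versus ``assertions in $L$ with parameters allowed'') matches the convention fixed before Proposition~\ref{prop:general-bounds}. There, substitution instances by formulas with parameters from $G$ are identified with sentential instances after naming those parameters by constants, so both readings define the same set. For the trivial group, parameters add nothing, so the bound even holds in the sentential regime at that world. That observation is not needed for the corollary, however.
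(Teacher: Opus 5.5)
Your proposal is correct and is essentially the paper's proof: the lower bound comes from Theorem~\ref{thm:lower-bound-S42}, and the upper bound comes from applying Theorem~\ref{thm:exact-S42} to a single torsion-free group. The trivial group and $\Z$ both qualify, and the validities of any one group bound the intersection over all groups. Your side remark about the trivial group is also sound, since its only parameter is already named by the constant $e$, though, as you note, it is not needed.
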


\begin{proof}
The lower bound follows from Theorem~\ref{thm:lower-bound-S42}. For the upper bound, apply Theorem~\ref{thm:exact-S42} to any torsion-free group.
\end{proof}

\markboth{MODAL GROUP THEORY}{MODAL GROUP THEORY}
\printbibliography[heading=mgtbibliography]

@article{BBL23,
  author  = {Berger, S{\"o}ren and Block, Alexander Christensen and L{\"o}we, Benedikt},
  title   = {The modal logic of abelian groups},
  journal = {Algebra Universalis},
  volume  = {84},
  year    = {2023},
  note    = {Article 25},
  doi     = {10.1007/s00012-023-00821-9}
}

@book{ChangKeisler,
  author    = {Chang, C. C. and Keisler, H. J.},
  title     = {Model Theory},
  publisher = {North-Holland},
  address   = {Amsterdam},
  edition   = {3},
  year      = {1990}
}

@article{HNN49,
  author  = {Higman, Graham and Neumann, B. H. and Neumann, Hanna},
  title   = {Embedding theorems for groups},
  journal = {Journal of the London Mathematical Society},
  volume  = {24},
  number  = {4},
  year    = {1949},
  pages   = {247--254},
  doi     = {10.1112/jlms/s1-24.4.247}
}

@incollection{Hodges84,
  author    = {Hodges, Wilfrid},
  title     = {Finite extensions of finite groups},
  editor    = {M{\"u}ller, Gert H. and Richter, Michael M.},
  booktitle = {Models and Sets},
  series    = {Lecture Notes in Mathematics},
  volume    = {1103},
  publisher = {Springer},
  address   = {Berlin},
  year      = {1984},
  pages     = {193--206},
  doi       = {10.1007/BFb0099387}
}

@article{HW24,
  author  = {Hamkins, Joel David and Wo{\l}oszyn, Wojciech Aleksander},
  title   = {Modal model theory},
  journal = {Notre Dame Journal of Formal Logic},
  volume  = {65},
  number  = {1},
  year    = {2024},
  pages   = {1--37},
  doi     = {10.1215/00294527-2024-0001}
}

@article{HamkinsArithmeticPotentialism,
  author  = {Hamkins, Joel David},
  title   = {The modal logic of arithmetic potentialism and the universal algorithm},
  journal = {Philosophia Mathematica},
  volume  = {34},
  number  = {1},
  year    = {2026},
  pages   = {137--182},
  doi     = {10.1093/philmat/nkag001}
}

@article{HamkinsLinneboPotentialism,
  author  = {Hamkins, Joel David and Linnebo, {\O}ystein},
  title   = {The modal logic of set-theoretic potentialism and the potentialist maximality principles},
  journal = {The Review of Symbolic Logic},
  volume  = {15},
  number  = {1},
  year    = {2022},
  pages   = {1--35},
  doi     = {10.1017/S1755020318000242}
}

@book{LyndonSchupp,
  author    = {Lyndon, Roger C. and Schupp, Paul E.},
  title     = {Combinatorial Group Theory},
  publisher = {Springer},
  address   = {Berlin},
  year      = {1977}
}

@article{Myhill55,
  author  = {Myhill, John},
  title   = {Creative sets},
  journal = {Zeitschrift f{\"u}r mathematische Logik und Grundlagen der Mathematik},
  volume  = {1},
  number  = {2},
  year    = {1955},
  pages   = {97--108},
  doi     = {10.1002/malq.19550010205}
}

@misc{WoloszynHomomorphisms,
  author = {Wo{\l}oszyn, Wojciech Aleksander},
  title  = {Modal group theory: homomorphisms},
  year   = {2025},
  sorttitle = {Modal group theory 1 homomorphisms},
  note   = {\mbox{e-mail} circulated preprint}
}

@misc{WoloszynEpimorphisms,
  author = {Wo{\l}oszyn, Wojciech Aleksander},
  title  = {Modal group theory: epimorphisms},
  year   = {2025},
  sorttitle = {Modal group theory 2 epimorphisms},
  note   = {\mbox{e-mail} circulated preprint}
}

@misc{WoloszynLinearOrders,
  author = {Wo{\l}oszyn, Wojciech Aleksander},
  title  = {The modal theory of linear orders},
  year   = {2025},
  sorttitle = {Modal theory of linear orders},
  note   = {\mbox{e-mail} circulated preprint}
}

@misc{WSet,
  author      = {Wo{\l}oszyn, Wojciech Aleksander},
  title       = {The modal theory of the category of sets},
  year        = {2026},
  eprinttype  = {arxiv},
  eprint      = {2603.25550},
  doi         = {10.48550/arXiv.2603.25550}
}

@incollection{Ziegler80,
  author    = {Ziegler, Martin},
  title     = {Algebraisch abgeschlossene Gruppen},
  editor    = {Adian, S. I. and Boone, W. W. and Higman, Graham},
  booktitle = {Word Problems II: The Oxford Book},
  series    = {Studies in Logic and the Foundations of Mathematics},
  volume    = {95},
  publisher = {North-Holland},
  address   = {Amsterdam},
  year      = {1980},
  pages     = {449--576}
}

\end{document}